\newtheorem{prop}{Proposition}[section]
\newtheorem{theorem}[prop]{Theorem}
\newtheorem{cor}[prop]{Corollary}
\newtheorem{lemma}[prop]{Lemma}
\theoremstyle{definition}
\newtheorem{Def}[prop]{Definition}
\newtheorem{rem}[prop]{Remark}
\newtheorem{example}[prop]{Example}
\numberwithin{equation}{section}
\def\R{\Bbb R}
\def\Dx{\Delta_x}
\def\Nx{\nabla_x}
\def\Dt{\partial_t}
\def\({\left(}
\def\){\right)}
\def\eb{\varepsilon}
\def\Cal{\mathcal}
\def\l{\lambda}
\def\l2l2{\Cal{L}(L^2(\Omega))}
\def\Bbb{\mathbb}
\def\<{\left<}
\def\>{\right>}
\def\divv{\operatorname{div}}
\def\Cal{\mathcal}
\begin{document}
\title[Kwak transform and inertial manifolds]{Kwak Transform and Inertial Manifolds revisited}
\author[A. Kostianko and S. Zelik] {Anna Kostianko${}^{1,2}$ and Sergey Zelik${}^{1,2}$}
\begin{abstract}The paper gives sharp spectral gap conditions for existence of inertial manifolds for abstract semilinear parabolic equations with non-self-adjoint leading part. Main attention is paid to the case where this leading part have Jordan cells which appear after applying the so-called Kwak transform to various important equations such as 2D Navier-Stokes equations, reaction-diffusion-advection systems, etc. The different forms of Kwak transforms and relations between them are also discussed.
\end{abstract}

\subjclass[2000]{35B40, 35B45, 35L70}

\keywords{Inertial Manifolds, semilinear parabolic equations, spectral gap conditions, Kwak transform}
\thanks{
 This work is partially supported by  the grant 19-71-30004 of RSF as well as  the EPSRC grant EP/P024920/1. The authors  would also like to thank A. Romanov   for stimulating discussions.}

\email{aNNa.kostianko@surrey.ac.uk}
\address{${}^1$ University of Surrey, Department of Mathematics, Guildford, GU2 7XH, United Kingdom.}
 \email{s.zelik@surrey.ac.uk}
\address{${}^2$ \phantom{e}School of Mathematics and Statistics, Lanzhou University, Lanzhou  \\ 730000,
P.R. China}
\maketitle
\section{Intorduction}\label{s0}
It is believed that the long-time dynamics generated by a dissipative PDE is effectively finite-dimensional, i.e., despite the infinite-dimensionality of the initial phase space, it can be governed by finitely many parameters (the so-called order parameters in the terminology of I. Prigogine) and the associated system of ODEs (the so-called inertial form (IF)) which describes the evolution of these order parameters.
\par
The problem of justification for such a reduction has been intensively studied during the last 30 years, see \cite{6,15,Mora,21,7,8, 18,Rom,9,10} and references therein. However, the precise mathematical meaning for this reduction remains a mystery. Indeed, the most popular construction for the above finite-dimensional reduction is based on constructing the so-called {\it global attractor} which is by definition a compact invariant set in the phase space which attracts the images of any bounded sets when time tends to infinity. The key result here is that under weak assumptions on the system considered, the global attractor exists and has finite Hausdorff and fractal dimensions. Together with the Man\'e projection theorem this give the desired finite-dimensional reduction as well as the IF, see \cite{6,24,25,8,10,19}.
\par
However, the above described scheme suffers from several essential drawbacks and hardly be considered as a reasonable solution of the above reduction problem. Namely, the IF constructed in this way is only H\"older continuous (which is not enough even for the uniqueness) and in general it is impossible to improve the regularity of the reduction. Another problem is that the involved Man\'e theorem gives a projection to a generic plane only and does not give any way to construct this plane explicitly. In addition, there are many examples appeared recently (see \cite{5,AZ1,AZ2,19}) which  show that dissipative systems generated even by parabolic equations in bounded domains may demonstrate features which cannot be interpreted as "finite-dimensional" in any reasonable sense. For instance, limit cycles with super-exponential rate of attraction, traveling waves in Fourier space, examples where the Hausdorff and fractal dimensions of the attractor are very different and depend on the choice of the phase space, etc. Thus, the finite-dimensional reduction problem occurs much more different and interesting than expected and requires further study.
\par
On the other hand, there is an ideal situation where the finite-dimension reduction works perfectly. Namely, when the considered system possesses the so-called inertial manifold (IM). By definition, this is at least Lipschitz (usually $C^{1+\eb}$) invariant finite-dimensional submanifold of the phase space with exponential tracking property (which usually a straightforward corollary of normal hyperbolicity). If such an object exists then we get the desired IF just by restricting our equations to the manifold, see \cite{15,17,29,8,18,19}. However, in contrast to global attractors, the existence of an IM requires strong restrictions (the so-called spectral gap conditions) which are not satisfied for many interesting equations including 2D Navier-Stokes problem. For instance, let us consider an abstract semilinear parabolic equation in a Hilbert space $H$:
\begin{equation}\label{0.abs}
\Dt u+Au=\Phi(u),\ \ u(t)\in H,\ t\ge0, \ u\big|_{t=0}=u_0,
\end{equation}
where $A:D(A)\to H$ is a positive {\it self-adjoint} linear operator with compact inverse and $\Phi$ is a nonlinearity which is, in a sense, subordinated to $A$. Namely, let $\{\lambda_k\}_{k=1}^\infty$ be the eigenvalues of the operator $A$ and $\{e_k\}_{k=1}^\infty$ be the corresponding eigenvectors. We denote by $H^s:=D(A^{s/2})$ the scale of Hilbert spaces generated by the operator $A$. Assume also that the map $\Phi$ is Lipschitz continuous as a map from $H$ to $H^\beta$ for some $\beta\in[0,-2)$, i.e.,
\begin{equation}\label{0.lip}
\|\Phi(u_1)-\Phi(u_2)\|_{H^\beta}\le L\|u_1-u_2\|_H,\ \ u_1,u_2\in H.
\end{equation}
Then the classical spectral gap condition reads: if there exists $n\in\Bbb N$ such that
\begin{equation}\label{0.gap}
\frac{\lambda_{n+1}-\lambda_n}{\lambda_n^{-\beta/2}+\lambda_{n+1}^{-\beta/2}}>L,
\end{equation}
then equation \eqref{0.abs} possesses an IM over the base generated by the linear combinations of the first $n$-eigenvectors. This also true for the case $\beta>0$ up to some minor changes, see \cite{19}. The most important for us are two cases: the case $\beta=0$ which corresponds to, say, reaction-diffusion equations where the spectral gap conditions read:
\begin{equation}\label{0.0gap}
\lambda_{n+1}-\lambda_n>2L
\end{equation}
and the case $\beta=-1$ which corresponds to, say, reaction-diffusion-advection equations or Navier-Stokes system where we need
\begin{equation}\label{0.1gap}
\sqrt{\lambda_{n+1}}-\sqrt{\lambda_n}>L
\end{equation}
to be satisfied. Keeping in mind the Weyl asymptotic for the eigenvalues of the Laplacian ($\lambda_m\sim Cm^{2/d}$) we see that, for the case of reaction-diffusion equations, the spectral gap conditions are automatically satisfied (for the properly chosen $n$) in the 1D case only and become problematic already in~2D. For the case of reaction-diffusion-advection equations, the situation is much worse since the spectral gap conditions fail already in 1D.
\par
It is also known that the above stated spectral gap conditions are sharp in the class of abstract semilinear parabolic equations in the sense that if they are violated it is possible to construct an equation from this class which does not possess an IM (see \cite{Mora,22,19} and references therein). However, the situation may become better if more concrete classes of equations are considered (e.g., a dissipative system which is generated by a PDE, which does not contain any pseudo-differential or non-local operators). In such classes IMs may exist even if the spectral gap condition is violated for all~$n$.
\par
Actually, up to the moment there are two different methods to get the existence of an IM beyond the spectral gap conditions. The first one is the so-called spatial averaging method suggested in \cite{16} for the case of 2D or 3D {\it scalar} reaction-diffusion equations with periodic boundary conditions, see also \cite{AZ3,A} for extensions to the case of Cahn-Hilliard equations, modified Navier-Stokes equations, etc. The key drawback is that this method usually works only for scalar equations and only for periodic boundary conditions (or close to that conditions, see \cite{Kwen}).
\par
An alternative method, which is potentially more promising but essentially less understood, is based on the idea to transform the initial equation to a new form or/and  to embed it to a larger system of equations in such a way that the new equations will satisfy spectral gap conditions. The most essential recent progress achieved by using this method is clarifying the situation with IMs for 1D reaction-diffusion-advection systems, see \cite{AZ1,AZ2} and also Section \ref{s1} below.
\par
To the best of our knowledge, the idea to use such embeddings/transforms in the theory of IMs follows from Kwak \cite{Kwak}. In this work a special transform which reduces 2D Navier-Stokes system on a torus to a larger system
\begin{equation}\label{0.kwak}
\Dt\(\begin{matrix}u\\v\end{matrix}\)+\(\begin{matrix}1&1\\0&1\end{matrix}\)A
\(\begin{matrix}u\\v\end{matrix}\)=\Bbb F(u,v),
\end{equation}
in a product $\Bbb H=H\times H$ of Hilbert spaces and with non-linearity $\Bbb F$ satisfying \eqref{0.lip} with $\beta=0$ has been constructed. Being precise, the initial Kwak transform gives slightly more complicated than \eqref{0.kwak} equations, but using the modification suggested in \cite{Rom}, one can get equations \eqref{0.kwak} even with simpler nonlinearity:
\begin{equation}\label{0.kwa-kwak}
\Dt\(\begin{matrix}u\\v\end{matrix}\)+\(\begin{matrix}1&1\\0&1\end{matrix}\)A
\(\begin{matrix}u\\v\end{matrix}\)=\(\begin{matrix}0\\F(u)\end{matrix}\),
\end{equation}
where the new non-linearity $F$ satisfies \eqref{0.lip} with $\beta=0$, see Section \ref{s1} below for more details.
\par
Unfortunately, the original paper of Kwak \cite{Kwak} (as well as the works of his successors, see \cite{Kwak1,Tem,Rom}) contains a {\it crucial} error related with an implicit assumption that the spectral gap conditions for equation \eqref{0.kwak} with {\it non-self-adjoint} leading part are the same as for equation \eqref{0.abs} with {\it self-adjoint} leading part. Namely, the authors apply conditions of the form \eqref{0.0gap} in order to construct an IM for equation \eqref{0.kwak}. In reality the spectral gap conditions for equations \eqref{0.kwak} and \eqref{0.kwa-kwak} differ drastically from the ones for the self-adjoint case and are more close to \eqref{0.1gap} rather than to \eqref{0.0gap} (due to the presence of Jordan cells in the leading part). Thus, the original Kwak's approach fails and the problem for the existence of an IM for 2D Navier-Stokes equations remains open. In addition, a counterexample of 1D reaction-diffusion-advection system with periodic boundary conditions which does not possess any IM has been recently constructed in \cite{AZ2}. This class of equations possesses a Kwak-type transform and can be reduced to the form \eqref{0.kwa-kwak}, see Section \ref{s1}. This confirms from the other side that the Kwak transform is not sufficient to construct an IM for such class of equations.
\par
Although the presence of an error in the above mentioned works is known for a long time, it is surprisingly difficult to find in the literature even the precise explanation where exactly the error is. Moreover, again to the best of our knowledge, the precise spectral gap conditions for equations \eqref{0.kwak} and \eqref{0.kwa-kwak} have been not known before. The main aim of the present paper is to cover this gap and to give sharp spectral gap conditions for both equations \eqref{0.kwak} and \eqref{0.kwa-kwak}. The next theorem, proved in Section \ref{s3}, can be treated as a main result of the paper.

\begin{theorem}\label{Th0.main} Let $A:D(A)\to H$ be a linear self-adjoint positive operator in a Hilbert space $H$ with compact inverse and let $\Bbb H:=H\times H$. Let also the nonlinearity $\Bbb F:\Bbb H\to\Bbb H$ be globally Lipschitz continuous with the Lipschitz constant $L$. Assume that there exists $n\in\Bbb N$ such that
\begin{equation}\label{0.k-gap}
\frac{(\lambda_{n+1}-\lambda_n)^2}
{\lambda_{n+1}+\lambda_n+2\sqrt{\lambda_n^2-\lambda_n\lambda_{n+1}+\lambda_{n+1}^2}}>L.
\end{equation}
Then, equation \eqref{0.kwak} possesses $2n$-dimensional IM with the base $P_n\Bbb H:=P_nH\times P_nH$ (here and below $P_n$ stands for the orthoprojector to the first $n$ eigenvectors of the operator $A$). Moreover, the spectral gap assumption is sharp in the following sense: if \eqref{0.k-gap} is violated there are examples of equations in the form \eqref{0.kwak} where such an IM does not exist.
\end{theorem}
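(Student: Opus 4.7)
My plan is to adapt the standard cone (strong--squeezing) method for inertial manifolds to the non-self-adjoint leading operator $\mathbb{A}:=\begin{pmatrix}1&1\\0&1\end{pmatrix}A$. Since the triangular matrix commutes with $A$, which is diagonal in its eigenbasis $\{e_k\}$, the product space $\mathbb{H}$ splits into the $\mathbb{A}$-invariant two-planes $V_k:=\operatorname{span}\{(e_k,0),(0,e_k)\}$ and $\mathbb{A}$ acts on each of them as the Jordan cell $M_k=\lambda_k\begin{pmatrix}1&1\\0&1\end{pmatrix}$ with double eigenvalue $\lambda_k$. In particular both $P_n\mathbb{H}$ and $Q_n\mathbb{H}$ are $\mathbb{A}$-invariant, so the task reduces to setting up a quantitative dichotomy between them that survives the Lipschitz perturbation by $\mathbb{F}$.

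The core difficulty is that every $M_k$ is non-diagonalisable, so $e^{-t\mathbb{A}}$ has polynomial-in-$t$ factors in the natural $\mathbb{H}$-inner product and the self-adjoint gap \eqref{0.0gap} fails outright. To absorb the Jordan factors, I would equip $\mathbb{H}$ with a blockwise weighted inner product $\langle w,w\rangle_\star:=\sum_k\langle W_k w_k,w_k\rangle$, with positive definite $2\times 2$ weights $W_k=\begin{pmatrix}\alpha_k&\gamma_k\\\gamma_k&\beta_k\end{pmatrix}$ to be chosen. A direct computation gives the sharp blockwise matrix inequalities
\[
\lambda_k(1-\delta_k)\,W_k \;\le\; \tfrac12\bigl(W_kM_k+M_k^\ast W_k\bigr) \;\le\; \lambda_k(1+\delta_k)\,W_k,\qquad \delta_k:=\frac{\alpha_k}{2\sqrt{\alpha_k\beta_k-\gamma_k^2}}.
\]
Introducing the cone functional $V(w):=\|Q_n w\|_\star^2-\|P_n w\|_\star^2$, differentiating along a difference of two solutions $w=w_1-w_2$ and combining the above bounds with the Lipschitz estimate on $\mathbb{F}$ (translated between $\|\cdot\|_{\mathbb{H}}$ and $\|\cdot\|_\star$ via the extremal eigenvalues of the $W_k$'s) yields cone invariance $\dot V\le 0$ on $\{V=0\}$ provided
\[
\inf_{k\ge n+1}\lambda_k(1-\delta_k)\;-\;\sup_{k\le n}\lambda_k(1+\delta_k)\;>\;C(\{W_k\})\,L,
\]
with $C$ an explicit functional encoding the norm mismatch.

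The heart of the proof is then the joint minimisation of the right-hand side over $\{W_k\}$. One expects, and then verifies, that only the two blocks $V_n,V_{n+1}$ are extremal, so the optimisation reduces to a finite problem in which a quadratic in the ``cone-aperture'' parameter linking these two blocks has to be balanced against the Lipschitz bound; the discriminant of this quadratic is exactly what produces the square root $\sqrt{\lambda_n^2-\lambda_n\lambda_{n+1}+\lambda_{n+1}^2}$ in the denominator of \eqref{0.k-gap}. Once cone invariance is proved, the $2n$-dimensional IM is constructed in the standard way as the Lipschitz graph of a map $\Psi:P_n\mathbb{H}\to Q_n\mathbb{H}$ via the Perron fixed-point problem for bounded backward-in-time orbits of \eqref{0.kwak}, and the exponential tracking property follows immediately from the cone inequality. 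For the sharpness statement I would construct a linear perturbation $\mathbb{F}(w)=\mathbb{B}w$ with $\|\mathbb{B}\|=L$ just above the critical value, supported on $V_n\oplus V_{n+1}$ and tuned so that the $4\times 4$ matrix $\mathbb{A}|_{V_n\oplus V_{n+1}}-\mathbb{B}$ has eigenvalues whose real parts straddle every would-be dichotomy threshold in $(\lambda_n,\lambda_{n+1})$, which rules out any $2n$-dimensional IM based on $P_n\mathbb{H}$. The main obstacle is precisely this weight optimisation: keeping track of the Jordan parameter $\delta_k$ throughout---exactly the point overlooked by the earlier Kwak-type arguments---and massaging the resulting discriminant into the algebraic form \eqref{0.k-gap}.
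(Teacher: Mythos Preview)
Your approach via weighted cone/Lyapunov functionals is genuinely different from the paper's. The paper does \emph{not} introduce a modified inner product on $\mathbb{H}$; instead it computes the norm of the linear solution operator $\mathbb{L}:h\mapsto\xi$ for $\partial_t\xi+\mathbb{A}\xi=h$ directly in the weighted trajectory space $L^2_{e^{\theta t}}(\mathbb{R},\mathbb{H})$, by Fourier expansion in the eigenbasis of $A$ followed by Fourier transform in $t$ and Plancherel. This reduces everything to evaluating $\sup_{\omega\in\mathbb{R}}\bigl\|(M_k-\theta+i\omega)^{-1}\bigr\|$ for each Jordan block $M_k$, then maximising over $k$ and minimising over $\theta\in(\lambda_n,\lambda_{n+1})$; the resulting elementary algebra (a quadratic in an auxiliary variable $\nu$ satisfying $\nu^2+\lambda\nu-(\lambda-\theta)^2=0$) is what produces the denominator in \eqref{0.k-gap}. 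The IM is then obtained by Banach contraction on $\xi=\mathbb{L}\mathbb{F}(\xi)+\mathbb{T}\xi_0^+$ in $L^2_{e^{\theta t}}(\mathbb{R}_-,\mathbb{H})$, and sharpness is shown by an explicit linear perturbation $\bar{\mathbb{F}}_k=-\bigl(\begin{smallmatrix}0&K\\K&0\end{smallmatrix}\bigr)$ on blocks $n,n+1$ that forces $\mu_n^+=\mu_{n+1}^-$, followed by a small rotation making them a complex pair---essentially what you sketch.

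The advantage of the paper's route is that Plancherel delivers the \emph{exact} operator norm of $\mathbb{L}$, so the contraction threshold is automatically sharp. Your weighted-cone route is structurally reasonable and your formula $\delta_k=\alpha_k/\bigl(2\sqrt{\alpha_k\beta_k-\gamma_k^2}\,\bigr)$ is correct, but the step you label ``joint minimisation over $\{W_k\}$'' is the entire difficulty and you have not carried it out. The Lipschitz bound on $\mathbb{F}$ is given in the unweighted $\mathbb{H}$-norm, so passing to $\|\cdot\|_\star$ costs the condition number of the weight; optimising $\lambda_{n+1}(1-\delta_{n+1})-\lambda_n(1+\delta_n)$ against that cost is a coupled problem whose outcome you only assert. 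Cone arguments based on the numerical range of a non-normal block in a fixed (even block-diagonal) metric generically overshoot the resolvent bound, so without the explicit optimisation you cannot claim the constant \eqref{0.k-gap} rather than merely a sufficient condition of the same qualitative shape. In short: the architecture is fine, but the quantitative heart of the proof---the one computation that distinguishes this theorem from the flawed earlier Kwak arguments---is missing from your proposal, whereas the paper's Fourier/resolvent calculation makes it a routine (if lengthy) exercise.
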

The spectral gap conditions become essentially simpler if the non-linearity $\Bbb F$ has a special form of \eqref{0.kwa-kwak}.
\begin{theorem}\label{Th0.main1} The analogue of Theorem \ref{Th0.main} holds for equation \eqref{0.kwa-kwak} with the improved spectral gap conditions which in this case read: there exists $n\in\Bbb N$ such that
\begin{equation}\label{0.kkgap}
\sqrt{\lambda_{n+1}}-\sqrt{\lambda_n}>\sqrt{L}.
\end{equation}
\end{theorem}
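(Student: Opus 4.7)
The plan is to exploit the triangular form of the nonlinearity to reduce \eqref{0.kwa-kwak} to a single second-order-in-time scalar equation for $u$, and then apply a Lyapunov--Perron contraction in a suitably weighted function space. The first component of \eqref{0.kwa-kwak} gives algebraically $v=-A^{-1}(\partial_t u+Au)$; substituting this into the second yields the equivalent scalar equation
\[
(\partial_t+A)^2 u=-A F(u),
\]
with $v$ subsequently recovered from $u$ through the same formula. On each eigenspace of $A$ with eigenvalue $\lambda_k$ this decouples mode by mode into $(\partial_t+\lambda_k)^2 u_k=-\lambda_k F_k(u)$, so that the two-dimensional Jordan block of \eqref{0.kwa-kwak} collapses to a double real root of the characteristic polynomial.

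I would then set up the Perron operator $\mathcal T$ in the weighted space $C_\gamma((-\infty,0];H)$ with norm $\|u\|_\gamma:=\sup_{t\le 0}e^{\gamma t}\|u(t)\|_H$, for some $\gamma\in(\lambda_n,\lambda_{n+1})$ to be chosen, constructing the bounded backward solution by convolution with the forward Green's function $\tau e^{-\lambda_k\tau}\chi_{\tau>0}$ on the high modes $\lambda_k>\gamma$, and with the corresponding backward Green's function on the low modes $\lambda_k<\gamma$, the latter matched at $t=0$ to the prescribed low-mode data for $(u,v)$. A direct computation (conjugating $\partial_t+\lambda_k$ by multiplication with $e^{-\gamma t}$, or equivalently testing on the extremal $e^{-\gamma t}$) shows that $((\partial_t+\lambda_k)^2)^{-1}$ acts with norm exactly $1/(\lambda_k-\gamma)^2$ on the $k$-th weighted component. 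Including the prefactor $\lambda_k$ from the $-A$ on the right-hand side and the Lipschitz constant $L$ of $F$, the mode-by-mode Lipschitz constant of $\mathcal T$ is $\lambda_k L/(\lambda_k-\gamma)^2$, which is monotone in $\lambda_k$ on each side of $\gamma$, so that the overall contraction factor is
\[
L\,\max\!\Bigl(\frac{\lambda_{n+1}}{(\lambda_{n+1}-\gamma)^2},\;\frac{\lambda_n}{(\gamma-\lambda_n)^2}\Bigr).
\]

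Balancing the two terms yields the optimal weight $\gamma^\ast=\sqrt{\lambda_n\lambda_{n+1}}$, at which they coincide and reduce precisely to $1/(\sqrt{\lambda_{n+1}}-\sqrt{\lambda_n})^2$; the contraction condition then becomes $L<(\sqrt{\lambda_{n+1}}-\sqrt{\lambda_n})^2$, i.e.\ \eqref{0.kkgap}, and the Banach fixed-point theorem produces the IM as the graph of a function $\Psi\colon P_n\mathbb{H}\to Q_n\mathbb{H}$ assigning to a low-mode initial datum the $Q_n$-component of the unique bounded backward solution at $t=0$. The main technical obstacle will be the bookkeeping in the reduction step: verifying the bijective correspondence between bounded weighted solutions of \eqref{0.kwa-kwak} in $\mathbb{H}$ and those of the reduced scalar equation with initial pair $(u(0),\partial_t u(0))$, via $\partial_t u(0)=-A(u(0)+v(0))$, together with checking that the recovered $v$ lies in the correct space and that the mode-by-mode estimate assembles into a genuine operator-norm bound on the chosen Banach space. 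The sharpness part should follow from a diagonal construction analogous to that for Theorem~\ref{Th0.main}, now specialised to triangular nonlinearities of the form $(0,F(u))$: the simultaneous saturation of the high- and low-mode estimates exactly at $\gamma^\ast=\sqrt{\lambda_n\lambda_{n+1}}$ shows that \eqref{0.kkgap} is indeed the critical condition.
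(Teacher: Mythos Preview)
Your existence argument is correct and is essentially the paper's strategy: a Lyapunov--Perron contraction applied to the $u$-component alone, with mode-wise multiplier norm $\lambda_k/(\lambda_k-\gamma)^2$ optimised at $\gamma=\sqrt{\lambda_n\lambda_{n+1}}$. The packaging differs in two harmless ways. First, the paper keeps the first-order system and works in $L^2_{e^{\theta t}}(\R_-,H)$, obtaining the identical multiplier $-\lambda_k/(\lambda_k-\theta+i\omega)^2$ via Plancherel (Proposition~\ref{Prop2.tr}); your second-order rewriting $(\partial_t+A)^2u=-AF(u)$ in a $C_\gamma$-space reaches the same bound through the $L^1$-norm of $\tau e^{-|\lambda_k-\gamma|\tau}$. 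Second, the paper recovers $v$ from the full system rather than from $v=-A^{-1}(\partial_t u+Au)$, but this is the same map. In fact the paper records both the second-order reformulation (Remark~\ref{Rem1.wave}) and an alternative self-adjoint reduction (Remark~\ref{Rem3.last}) that independently reproduces \eqref{0.kkgap}, so your route is anticipated even if not the one used in the proof.

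Your sharpness sketch, however, is too thin. Simultaneous saturation of the two extremal modes at $\gamma^\ast$ is a heuristic for criticality, not a counterexample; one must actually exhibit a linear $F$ of the admissible form for which no Lipschitz IM over $P_n\Bbb H$ exists. The paper's construction (Theorem~\ref{Th3.main2}) is genuinely more delicate here than in Proposition~\ref{Prop3.sharp} precisely because the perturbation must remain of the triangular shape $(0,F(u))$. One first takes $\bar{\Bbb F}_k=-\bigl(\begin{smallmatrix}0&0\\K&0\end{smallmatrix}\bigr)$ for $k=n,n+1$ with $K=(\sqrt{\lambda_{n+1}}-\sqrt{\lambda_n})^2$, forcing the eigenvalue collision $\mu_n^+=\mu_{n+1}^-=\sqrt{\lambda_n\lambda_{n+1}}$; but the second step, an $\eb$-small perturbation that splits this double eigenvalue into a complex-conjugate pair, cannot be chosen freely in the two-dimensional collision plane as in the general case. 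Instead one must couple the $u_n$ and $u_{n+1}$ components while preserving the $(0,F(u))$ structure, and then verify via an explicit $4\times4$ characteristic determinant that complex eigenvalues really appear. Your proposal does not identify this obstruction or the workaround.
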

We see that this spectral gap condition almost coincides with the condition \eqref{0.1gap} which we have initially before applying the Kwak transform and this clarifies why the Kwak transform in its original form does not help to construct an IM. The presence of the "mysterious" term $\sqrt L$ is explained in Remark \ref{Rem3.last}. Nevertheless, as we have already mentioned, more advanced embeddings/transforms (which, say, do not destroy the self-adjoint structure of the leading part) may be effective tools for establishing  the existence of IMs and definitely deserve further attention.
\par
The paper is organized as follows. In section \ref{s1} we give a number of examples of Kwak-type transforms, some of them are well-known, but others look new, and discuss the relations between them. In particular, we present here a bit unexpected connection between Kwak transform and wave equations with structural damping, see Remark \ref{Rem1.wave} below.
\par
Section \ref{s2} is devoted to study the linear problem of the form \eqref{0.kwak} in the properly chosen weighted spaces of trajectories. This is the central part of the paper and the estimates obtained there are crucial for our construction of an IM.
\par
Finally, Theorems \ref{Th0.main} and \ref{Th0.main1} are proved in Section \ref{s3} using the so-called Perron method. Our approach to find sharp spectral gap conditions is inspired by \cite{17} and also by more recent work \cite{ACZ}.

\section{Examples of Kwak-type transforms}\label{s1}
In this section, we consider several examples related with Kwak transforms for various classes of semilinear parabolic problems. We start with the simplest Burger's equation with periodic boundary conditions.
\begin{example}\label{Ex1.1} Let us consider the  viscous Burger's equation
\begin{equation}\label{1.bur}
\Dt u =\nu\partial_x^2 u+\partial_x(u^2)-f(u),\ \ x\in(-\pi,\pi)
\end{equation}
endowed with periodic BC. Here $\nu>0$ is a given parameter and $f(u)$ is a given smooth nonlinearity. The full nonlinearity here is $F(u):=\partial_x(u^2)-f(u)$. Since it contains $\partial_xu$, it decreases the smoothness by one, namely, $F$ is a smooth map from $H^s_{per}(-\pi,\pi)$ to $H^{s-1}_{per}$ if $s$ is large enough (say, $s>\frac12$). Thus, we need to take $\beta=-1$ in the condition \eqref{0.gap}. Using that the eigenvalues are $\lambda_N=\nu N^2$ (with multiplicity two), the spectral gap condition reads
$$
\frac{\lambda_{N+1}-\lambda_N}{\lambda_{N+1}^{1/2}+\lambda_N^{1/2}}=\nu>L
$$
and do not hold if $L\ge\nu$.
\par
The key idea of Kwak was to embed equation \eqref{1.bur} to a larger system of semilinear equations in such a way that the new non-linearity will be more regular, for instance, will not contain the spatial derivatives $\partial_x u$ which, in turn, would allow us to use \eqref{0.gap} with $\beta=0$, see \cite{Kwak, Kwak1}. To realize this idea, we introduce the new variables $v(t):=\partial_x u$ and $w(t):=\nu^{-1}u^2(t)$. Then, differentiating \eqref{1.bur} in $x$, after straightforward computations, we arrive at
\begin{multline}
\Dt u=\nu\partial_x^2 u+2uv-f(u),\ \ \Dt v=\nu\partial_x^2 v+\nu\partial_x^2 w-f'(u)v, \\
\Dt w=\nu\partial_x^2 w-2v^2+2\nu^{-1}u(2uv-f(u)).
\end{multline}
Thus, the new system of semilinear equations reads:
\begin{multline}
\Dt\(\begin{matrix}u\\v\\w\end{matrix}\)-
\nu\(\begin{matrix}1&0&0\\0&1&1\\0&0&1\end{matrix}\)\partial_x^2
\(\begin{matrix}u\\v\\w\end{matrix}\)=
\(\begin{matrix}2uv-f(u)\\-f'(u)v\\2\nu^{-1}u(2uv-f(u))-2v^2\end{matrix}\)
\end{multline}
and we see that the new nonlinearity  indeed does not contain the spatial derivatives and acts from $H^s_{per}$ to $H^s_{per}$, so $\beta=0$.
\end{example}
\begin{example}\label{Ex1.2}
 Let us consider the following reaction-diffusion system in a bounded domain $\Omega\subset\R^2$,  say, with Dirichlet boundary condition:
\begin{equation}\label{1.rds}
\Dt u-\Dx u=f(u),\ \ u\big|_{\partial\Omega}=0,\ \ f(0)=0.
\end{equation}
Here $\beta=0$, but since $\lambda_N\sim CN$ due to the Weyl asymptotic, this is still not enough for the spectral gap condition to be satisfied. Following to Romanov (see \cite{Rom}), we introduce the variable $v(t):=(\Dx)^{-1}f(u(t))$. Then, this function solves
\begin{multline*}
\Dt v-\Dx v=(\Dx)^{-1}[f'(u)(\Dt u-\Dx u)-f''(u)|\Nx u|^2]=\\=(\Dx)^{-1}\(f'(u)f(u)-f''(u)|\Nx u|^2\):=F(u).
\end{multline*}
Then, using the Sobolev embedding theorem, it is not difficult to verify that $F$ is a smooth map from $H^{2+\eb}_{\Dx}$ to $H^{3+\eb}_{\Dx}$ for any $0<\eb<1/2$ (here we denote $H^s_\Delta:=D((-\Dx)^{s/2}$)) and therefore, we have reduced the initial problem to
\begin{equation}
\Dt \(\begin{matrix}u\\v\end{matrix}\)+\(\begin{matrix}1&1\\0&1\end{matrix}\)(-\Dx)
\(\begin{matrix}u\\v\end{matrix}\)=\(\begin{matrix}0\\F(u)\end{matrix}\),
\end{equation}
where the new nonlinearity satisfies the Lipschitz assumption with $\beta=1$.
\end{example}
\begin{example}\label{Ex1.3} Let us  consider the 1D reaction-diffusion-advection system
\begin{equation}\label{1.rda}
\Dt u+(1-\partial_x^2)u =f(u,u_x),\  x\in(-\pi,\pi)
\end{equation}
endowed with periodic boundary conditions. Here $u=u(t,x)=(u^1,\cdots,u^k)$ is an unknown vector-valued function and $f$ is a given smooth non-linearity.
\par
Following Example \ref{Ex1.2}, we introduce the new variable
$$
v(t):=(\partial_x^2-1)^{-1}f(u(t),u_x(t))
$$
 which solves
\begin{multline}\label{1.F}
\Dt v+(1-\partial_x^2)v=(\partial^2_x-1)^{-1}(\Dt f+(1-\partial_x^2)f)=\\=
(\partial^2_x-1)^{-1}\( f'_u(\Dt u+(1-\partial_x^2)u)+f'_{u_x}(\Dt u_x+\right.\\+\left.(1-\partial_x^2)u_x)-f'_uu-f'_{u_x}u_x-f+\right.\\\left.+f''_{u,u}[u_x,u_x]+2f''_{u,u_x}[u_x,u_{xx}]+f''_{u_x,u_x}[u_{xx},u_{xx}]\)=\\=
(\partial^2_x-1)^{-1}\(f'_uf+f'_{u_x}(f'_uu_x+f'_{u_x}u_{xx})-f'_uu-f'_{u_x}u_x-f+\right.\\
\left.+f''_{u,u}[u_x,u_x]+2f''_{u,u_x}[u_x,u_{xx}]+f''_{u_x,u_x}[u_{xx},u_{xx}]\):=F(u).
\end{multline}
Using the elliptic regularity and the fact that $H^s_{per}\subset C$ if $s>\frac12$, we see that the map $F$ is well-defined and smooth as a map from $H^{s}_{per}(-\pi,\pi)$ to $H^s_{per}(-\pi,\pi)$ for $s>\frac52$. Thus, the initial reaction-diffusion-advection problem is reduced to the following one
\begin{equation}\label{1.jordan}
\Dt \(\begin{matrix} u\\v\end{matrix}\)+\(\begin{matrix}1&1\\0&1\end{matrix}\)A\(\begin{matrix} u\\v\end{matrix}\)=\(\begin{matrix} 0\\F(u)\end{matrix}\),
\end{equation}
where $A:=(1-\partial_x^2)$.
\end{example}
\begin{example}\label{Ex1.4} Consider 2D Navier-Stokes equation with periodic boundary conditions:
\begin{equation}
\Dt u-\Dx u=-(u,\Nx) u-\Nx p+g,\ \ \divv u=0,
\end{equation}
where $u=(u_1,u_2)$ is a velocity vector field, $p$ is pressure and $g$ are given smooth external forces. We assume that $g$ and $u$ have zero means and denote by $\Pi$ the standard Leray projector to divergent free vector fields. Recall also that in the case of periodic boundary conditions $\Pi$ commutes with the Laplacian. Analogously to Examples \ref{Ex1.1} and \ref{Ex1.2}, we introduce a new variable
$$
v(t):=(-\Dx)^{-1}\Pi[(u(t),\Nx)u(t)-g]=(-\Dx)^{-1}\Pi(u_1\partial_{x_1}u+u_2\partial_{x_2}u-g),
$$
where $(-\Dx)^{-1}$ is the inverse Laplacian with periodic boundary conditions and zero mean.
Then, this function satisfies
\begin{multline}
\Dt v-\Dx v=(-\Dx)^{-1}\Pi\((\Dt u-\Dx u,\Nx)u+\right.\\
\left.+(u,\Nx)(\Dt u-\Dx u)-2(\Nx u,\Nx)\Nx u\)-\Pi g=\\=(-\Dx)^{-1}\Pi\(-(\Pi(u,\Nx)u,\Nx)u-(u,\Nx)(\Pi(u,\Nx)u)+(\Pi g,\Nx)u+\right.\\\left.(u,\Nx)(\Pi g)-2(\Nx u,\Nx)\Nx u\)-\Pi g:=F(u)
\end{multline}
and using that $H^s_{per}\subset C$ if $s>\frac32$, together with the elliptic regularity for the Leray projector, we see that $F$ maps $H^s_{per}\cap\{\divv u=0\}$ to itself if $s>\frac52$, so, similarly to the reaction-diffusion-advection case, we again have $\beta=0$ for this non-linearity and the transformed equation has the form of \eqref{1.jordan} with $A=-\Dx$.
\end{example}
\begin{rem}\label{Rem1.wave} The transform presented in Example \ref{Ex1.4} differs slightly from the original Kwak transform for 2D Navier-Stokes equation, see \cite{Kwak} and \cite{9} and is inspired by the construction from \cite{Rom} discussed in Example \ref{Ex1.2}. The advantage of this modified Kwak transform is that the transformed system has less number of equation (in comparison with the original version) and also has more transparent structure, namely, the first component of the nonlinearity in \eqref{1.jordan} vanishes and the second one depends only on $u$. This structure allows us to reduce the transformed system \eqref{1.jordan} to the second order scalar equation by expressing $v$ through $u$ from the first equation and inserting the result to the second equation:
\begin{equation}\label{1.wave}
A^{-1}\Dt^2 u+2\Dt u+Au=-F(u),\ \ \text{or}\ \ \Dt^2 u+2A\Dt u+A^2 u=-AF(u).
\end{equation}
This in turn gives an interesting connection between Navier-Stokes and wave equations with structural damping realized via the Kwak transform.
\end{rem}
\begin{rem}\label{Rem1.bla} The example of a 1D system of reaction-diffusion-advection equations of the form \eqref{1.rda} which does not possess any finite-dimensional inertial manifold has been presented in \cite{AZ2}. Moreover, in this example two trajectories $u_1(t)$ and $u_2(t)$ of equation \eqref{1.rda} belonging to the global attractor $\mathcal A$ such that
\begin{equation}\label{1.e3}
\|u_1(t)-u_2(t)\|_{L^2}\le Ce^{-\alpha t^3},\ \ t\ge0, \ \ C,\alpha>0
\end{equation}
have been explicitly constructed. Thus, at least for the case of reaction-diffusion-advection problems, the possibility to make the Kwak transform and to reduce the system to the form \eqref{1.jordan} is still {\it not enough} to get an inertial manifold. In particular, we cannot use the standard spectral gap condition
$$
\lambda_{N+1}-\lambda_N>2L
$$
in the case where the leading operator is not self-adjoint and possesses Jordan cells. One more interesting  observation is that the considered reaction-diffusion-advection problem {\it cannot be embedded} into a larger system of semilinear parabolic equations of the form
\begin{equation}\label{1.sa}
\Dt U+\Bbb A U=\Bbb F(U)
\end{equation}
in a proper Hilbert space $\Bbb H$
with positive {\it self-adjoint} operator $\Bbb A$ with compact inverse and a Lipschitz non-linearity $\Bbb F:\Bbb H\to\Bbb H$. Indeed, as known, see e.g. \cite{19}, two bounded trajectories of \eqref{1.sa} cannot approach each other faster than $e^{-\alpha t^2}$, so the existence of such an embedding contradicts \eqref{1.e3}.
\par
On the other hand, equation \eqref{1.jordan} can be easily transformed back to the form \eqref{1.sa} with {\it self-adjoint} operator $\Bbb A$ and more singular nonlinearity $\Bbb F$. Indeed, introducing the variable $\tilde u(t):=A^{-1/2}u(t)$, we rewrite \eqref{1.jordan} as follows:
\begin{equation}\label{1.saa}
\Dt\(\begin{matrix}\tilde u\\v\end{matrix}\)+\(\begin{matrix}1&0\\0&1\end{matrix}\)A
\(\begin{matrix}\tilde u\\v\end{matrix}\)=\(\begin{matrix} -A^{1/2}v\\F\(A^{1/2}\tilde u\)\end{matrix}\):=\Bbb F(\tilde u,v),
\end{equation}
so if $F(u)$ acts from $H^s$ to $H^s$, the nonlinearity $\Bbb F$ will act from $\Bbb H^{s+1}$ to $\Bbb H^{s}$.
\end{rem}
\begin{example}\label{Ex1.5} Iterations of the Kwak transform. Let us return to Example \ref{Ex1.3} and introduce one more variable $w(t):=-A^{-1}F(u(t))$, where $F(u)$ is defined in \eqref{1.F}. Then, as elementary calculations show, the triple $(u,v,w)$ solves
\begin{equation}
\Dt \(\begin{matrix}u\\v\\w\end{matrix}\)+\(\begin{matrix}1&1&0\\0&1&1\\0&0&1\end{matrix}\)
A\(\begin{matrix}u\\v\\w\end{matrix}\)=\(\begin{matrix}0\\0\\\Phi(u)\end{matrix}\),
\end{equation}
where the non-linearity $\Phi$ acts from $H^s_{per}(-\pi,\pi)$ to $H^{s+1}_{per}(-\pi,\pi)$ for $s>\frac72$ and therefore $\beta=1$ for this transformed system. Since the initial equation may not have an inertial manifold, we see that the appearance of larger Jordan cells in the leading linear part requires stronger spectral gap assumptions for the inertial manifold to exist. The described scheme may be further iterated. This will lead to more and more regularizing   nonlinearities, but the advantage of this will be neglected by larger and larger Jordan cells in the leading part. Note also that the analogous transformations work for the Navier-Stokes system as well.
\end{example}

\section{Key estimates for the linear equation}\label{s2}
The aim of this section is to compute the norms of solution operators for the linearized equations associated with problem \eqref{0.kwak} in the corresponding weighted spaces. These estimates will be crucially used in the next section for constructing the inertial manifolds for the non-linear problem.
\par
We recall that
$H$ is a Hilbert space and $A: D(A)\to H$ is a linear (unbounded) positive self-adjoint operator with compact inverse. Let also $\{\lambda_n\}_{n=1}^\infty$ be its eigenvalues enumerated in the non-decreasing order and $\{e_n\}_{n=1}^\infty$ be the corresponding orthonormal base of eigenvectors. Finally, let $\Bbb  H:=H\times H$ and
$$
\Bbb A:=\(\begin{matrix} 1&1\\0&1\end{matrix}\)A.
$$
We consider the following linear non-homogeneous equation in the space $\Bbb H$:
\begin{equation}\label{2.1}
\partial_t\xi+\Bbb A\xi=h,\ \ \xi=(u,v)^t,\ \ h=(f,g)^t,
\end{equation}
where $t\in\R$ and the right-hand side $h$ belongs to the weighted space $L^2_{e^{\theta t}}(\R,\Bbb H)$ for some fixed exponent $\theta$. We also recall that this space is a subspace of $L^2_{loc}(\R,\Bbb H)$ defined by the following norm:
\begin{equation}
\|\xi\|^2_{L^2_{e^{\theta t}}(\R,\Bbb H)}:=\int_\R e^{2\theta t}\|\xi(t)\|^2_{\Bbb H}\,dt<\infty
\end{equation}
and $\|\xi\|^2_{\Bbb H}:=\|u\|^2_H+\|v\|^2_H$.
\par
 It is not difficult to see that, in the non-resonant case where
$$
\theta\ne\lambda_k,\ \ k\in\Bbb N,
$$
equation \eqref{2.1} is uniquely solvable in the space $L^2_{e^{\theta t}}(\R,\Bbb H)$, so the solution operator
$$
\Bbb L: L^2_{e^{\theta t}}(\R,\Bbb H)\to L^2_{e^{\theta t}}(\R,\Bbb H),\ \ \Bbb Lh:=\xi
$$
is well-defined.
\par
Our task now is to compute explicitly the norm of this operator and minimize it with respect to $\theta\in(\lambda_n,\lambda_{n+1})$. The answer is given by the following proposition.

\begin{prop}\label{Prop2.1} Let $\lambda_{n+1}>\lambda_n$ and $\theta\in(\lambda_n,\lambda_{n+1})$. Then the minimal value of the norm of the solution operator $\Bbb L$ is achieved for
\begin{equation}\label{2.min}
\theta=\frac23(\lambda_{n+1}+\lambda_n)-\frac13\sqrt{\lambda_{n+1}^2-\lambda_n\lambda_{n+1}+\lambda_n^2}
\end{equation}
and is equal to
\begin{equation}\label{2.15}
\|\Bbb L\|_{\Cal L(L^2_{e^{\theta t}}(\R,\Bbb H),L^2_{e^{\theta t}}(\R,\Bbb H))}=\frac{\lambda_{n+1}+\lambda_n+2\sqrt{\lambda_{n+1}^2-\lambda_n\lambda_{n+1}+\lambda_n^2}}
{(\lambda_{n+1}-\lambda_n)^2}.
\end{equation}
\end{prop}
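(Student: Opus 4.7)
The approach is to compute $\|\Bbb L\|$ by a frequency-by-frequency spectral estimate, then diagonalise on the eigenbasis of $A$ and optimise in $\omega$, in $k$, and finally in $\theta$. First I substitute $\tilde\xi(t):=e^{\theta t}\xi(t)$, $\tilde h(t):=e^{\theta t}h(t)\in L^2(\R,\Bbb H)$, so that the equation becomes $\partial_t\tilde\xi+(\Bbb A-\theta)\tilde\xi=\tilde h$, and take the Fourier transform in time to get $\hat\xi(\omega)=(\Bbb A-\theta+i\omega)^{-1}\hat h(\omega)$; in the non-resonant regime this inverse is bounded uniformly in $\omega$. Plancherel then yields
$$\|\Bbb L\|_{\Cal L(L^2_{e^{\theta t}}(\R,\Bbb H))}=\sup_{\omega\in\R}\|(\Bbb A-\theta+i\omega)^{-1}\|_{\Cal L(\Bbb H)}.$$

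Second, $\Bbb A$ preserves each two-dimensional subspace $\mathrm{span}\{(e_k,0),(0,e_k)\}$, on which it acts by the $2\times 2$ matrix $\bigl(\begin{smallmatrix}\lambda_k&\lambda_k\\0&\lambda_k\end{smallmatrix}\bigr)$. Setting $z_k:=\lambda_k-\theta+i\omega$, $r_k:=|z_k|^2$, and $M_k:=\bigl(\begin{smallmatrix}z_k&\lambda_k\\0&z_k\end{smallmatrix}\bigr)$ for the restriction of $\Bbb A-\theta+i\omega$ to this subspace, one has $M_k^{-1}=z_k^{-2}\bigl(\begin{smallmatrix}z_k&-\lambda_k\\0&z_k\end{smallmatrix}\bigr)$. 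Computing the larger eigenvalue of the positive matrix $M_k^{-1}(M_k^{-1})^*$ and recognising the perfect square $4r_k+2\lambda_k^2+2\lambda_k\sqrt{4r_k+\lambda_k^2}=(\sqrt{4r_k+\lambda_k^2}+\lambda_k)^2$ yields
$$\|M_k^{-1}\|=\frac{\sqrt{4r_k+\lambda_k^2}+\lambda_k}{2r_k}=\frac{2}{\sqrt{4r_k+\lambda_k^2}-\lambda_k}.$$

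Third, the rationalised second form is strictly decreasing in $r_k$, so the supremum in $\omega$ is attained at $\omega=0$, giving $r_k=(\lambda_k-\theta)^2$. Introducing $\phi(\lambda):=\sqrt{\lambda^2+4(\lambda-\theta)^2}-\lambda$, the inequality $\phi\ge 0$ with equality only at $\lambda=\theta$, together with an elementary derivative check, shows $\phi$ is strictly decreasing on $(0,\theta)$ and strictly increasing on $(\theta,\infty)$. Hence for $\theta\in(\lambda_n,\lambda_{n+1})$ the infimum of $\phi(\lambda_k)$ over $k\in\N$ is $\min(\phi(\lambda_n),\phi(\lambda_{n+1}))$, and $\|\Bbb L\|=2/\min(\phi(\lambda_n),\phi(\lambda_{n+1}))$.

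Finally, I minimise this over $\theta\in(\lambda_n,\lambda_{n+1})$. Since $\phi(\lambda_n)$ increases from $0$ and $\phi(\lambda_{n+1})$ decreases to $0$ on this interval, the optimum is attained at the balancing point $\phi(\lambda_n)=\phi(\lambda_{n+1})=:s$. Squaring $s+\lambda_k=\sqrt{\lambda_k^2+4(\lambda_k-\theta)^2}$ for $k=n,n+1$ yields $s^2+2\lambda_k s=4(\lambda_k-\theta)^2$; subtracting eliminates $s^2$ and gives the linear relation $s=2(\lambda_n+\lambda_{n+1})-4\theta$. Substituting back reduces the problem to the quadratic $3\theta^2-4(\lambda_n+\lambda_{n+1})\theta+(\lambda_n^2+3\lambda_n\lambda_{n+1}+\lambda_{n+1}^2)=0$, whose root lying in $(\lambda_n,\lambda_{n+1})$ is precisely \eqref{2.min}; a short rationalisation then transforms $\|\Bbb L\|=2/s$ into the right-hand side of \eqref{2.15}. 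The main obstacle is the algebraic bookkeeping in this last step; the key trick is to treat $s$ as a separate unknown, since subtracting the two squared identities decouples $s$ from $\theta$ and reduces an otherwise quartic problem to a quadratic one.
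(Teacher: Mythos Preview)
Your proposal is correct and follows essentially the same route as the paper: the weighted change of variables plus Plancherel reduction to the $2\times2$ block symbol, optimisation in $\omega$ (you use the rationalised form, the paper differentiates $\mu_{\min}$; same content), monotonicity in $\lambda$ reducing the supremum to $k=n,n+1$, and then the balancing argument in $\theta$. Your variable $s=\phi(\lambda_n)=\phi(\lambda_{n+1})$ is exactly $2\nu$ in the paper's notation, and your subtraction trick to linearise in $s$ is precisely the paper's step $\nu=\lambda_{n+1}+\lambda_n-2\theta$, leading to the identical quadratic $3\theta^2-4(\lambda_n+\lambda_{n+1})\theta+(\lambda_n+\lambda_{n+1})^2+\lambda_n\lambda_{n+1}=0$.
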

\begin{proof}
 First, we make  change $\tilde\xi(t):=e^{\theta t}\xi(t)$ of the independent variable which reduces problem \eqref{2.1} to
\begin{equation}\label{2.2}
\partial_t\tilde \xi+(\Cal A-\theta)\tilde\xi=\tilde h,\ \ \tilde \xi:=(\tilde u,\tilde v)^t,\ \ h=(\tilde f,\tilde g)^t,
\end{equation}
where $\tilde \xi,\tilde h\in L^2(\R,\Bbb H)$. Thus,  estimating the solution $\xi$ of \eqref{2.1} in the weighted space $L^2_{e^{\theta t}}(\R,\Bbb H)$ is equivalent to estimating the solution $\tilde \xi$ of equation \eqref{2.2} in the non-weighted space $L^2(\R,\Bbb H)$.
\par
Second, we expand the solution $\tilde \xi(t)=\sum_{n=1}^\infty \tilde\xi_n(t) e_n$ where the functions $\tilde \xi_n(t)=(\tilde u_n(t),\tilde v_n(t))^t$ solve
\begin{equation}\label{2.3}
\partial_t\(\begin{matrix} \tilde u_n\\\tilde v_n\end{matrix}\)+\(\begin{matrix} \lambda_n-\theta&\lambda_n\\0&\lambda_n-\theta\end{matrix}\)\(\begin{matrix} \tilde u_n\\\tilde v_n\end{matrix}\)=\(\begin{matrix} \tilde f_n\\\tilde g_n\end{matrix}\).
\end{equation}
If we denote by $\Bbb L_n: [L^2(\R)]^2\to [L^2(\R)]^2$ the solution operators of problems \eqref{2.3}, then, due to the Parseval equality,
\begin{equation}\label{2.4}
\|\Bbb L\|_{\Cal L(L^2_{e^{\theta t}}(\R,\Bbb H),L^2_{e^{\theta t}}(\R,\Bbb H))}=\sup_{n\in\Bbb N}\|\Bbb L_n\|_{\Cal L([L^2(\R)]^2,[L^2(\R)]^2)}
\end{equation}
and we only need to find the norms of operators  $\Bbb L_n$. To this end, we do the Fourier transform in time and denote the Fourier images of $\tilde u_n(t)$ and $\tilde v_n(t)$ by $\hat u_n(\omega)$ and $\hat v_n(\omega)$ respectively. Then
\begin{equation}\label{2.5}
\(\begin{matrix} \hat u_n(\omega)\\\hat v_n(\omega)\end{matrix}\)=\(\begin{matrix} \lambda_n-\theta+i\omega&\lambda_n\\0&\lambda_n-\theta+i\omega\end{matrix}\)^{-1}\(\begin{matrix} \hat  f_n(\omega)\\\hat g_n(\omega)\end{matrix}\)
\end{equation}
and, due to the Plancherel theorem,
\begin{multline}\label{2.6}
\|\Bbb L_n\|_{\Cal L([L^2(\R)]^2,[L^2(\R)]^2)}=\\=\sup_{\omega\in\R}\bigg\|\(\begin{matrix} \lambda_n-\theta+i\omega&\lambda_n\\0&\lambda_n-\theta+i\omega\end{matrix}\)^{-1}\bigg\|_{\Cal L(\R^2,\R^2)}.
\end{multline}
Thus, the problem is actually reduced to finding the norm of $2\times2$-matrix $A_{\lambda,\theta,\omega}^{-1}$, where
\begin{equation}\label{2.7}
A_{\lambda,\theta,\omega}:=\(\begin{matrix} \lambda-\theta+i\omega&\lambda\\0&\lambda-\theta+i\omega\end{matrix}\).
\end{equation}
Moreover, as known, this norm is equal to the inverse square root of the minimal eigenvalue of the matrix
\begin{equation}\label{2.8}
A_{\lambda,\theta,\omega}A_{\lambda,\theta,\omega}^*=\(\begin{matrix} (\lambda-\theta)^2+\omega^2+\lambda^2&\lambda(\lambda-\theta-i\omega)\\
\lambda(\lambda-\theta+i\omega)&(\lambda-\theta)^2+\omega^2 \end{matrix}\).
\end{equation}
The characteristic equation reads
$$
\mu^2-(2(\lambda-\theta)^2+2\omega^2+\lambda^2)\mu+((\lambda-\theta)^2+\omega^2)^2=0
$$
and the desired minimal eigenvalue is given by
\begin{equation}\label{2.9}
\mu_{min}=\frac{2(\lambda-\theta)^2+2\omega^2+\lambda^2-\lambda\sqrt{4(\lambda-\theta)^2+
4\omega^2+\lambda^2}}2.
\end{equation}
We claim that the minimum of the function $\omega\to\mu_{min}(\lambda,\theta,\omega)$ is achieved at $\omega=0$. Indeed
$$
\partial_{\omega}\mu_{min}(\lambda,\theta,\omega)=
2\omega\(1-\frac\lambda{\sqrt{4(\lambda-\theta)^2+4\omega^2+\lambda^2}}\)
$$
and $\omega\partial_{\omega}\mu_{min}\ge0$ for all $\omega$. Thus, we  need $\mu_{min}(\lambda,\theta,\omega)$ for $\omega=0$ only and, according to \eqref{2.6}
\begin{equation}\label{2.10}
\|\Bbb L_n\|_{\Cal L([L^2(\R)]^2,[L^2(\R)]^2)}^2=\frac2{2(\lambda_n-\theta)^2+\lambda_n^2-
\lambda_n\sqrt{4(\lambda_n-\theta)^2+\lambda_n^2}}.
\end{equation}
At the next step, keeping in mind the necessity to compute the maximum of $\|\Bbb L_n\|$ with respect to $n$, we study the dependence of $\mu_{min}(\lambda,\theta,0)$ on $\lambda$. Indeed, as not difficult to check
\begin{multline}
\partial_{\lambda}\mu_{min}(\lambda,\theta,0)=-\frac{(2\theta-3\lambda)\(
\sqrt{4(\lambda-\theta)^2+\lambda^2}-\lambda\)+2(\theta-\lambda)^2}
{\sqrt{4(\lambda-\theta)^2+\lambda^2}}=\\=
\frac{-2(\theta-\lambda)^2\(4\theta-5\lambda+\sqrt{4(\lambda-\theta)^2+\lambda^2}\)}
{\sqrt{4(\lambda-\theta)^2+\lambda^2}\(\sqrt{4(\lambda-\theta)^2+\lambda^2}+\lambda\)}=\\=
\frac{8(\lambda-\theta)^3\(\theta+\sqrt{4(\lambda-\theta)^2+\lambda^2}\)}
{\sqrt{4(\lambda-\theta)^2+\lambda^2}\(\sqrt{4(\lambda-\theta)^2+\lambda^2}+\lambda\)^2}.
\end{multline}
Thus, the function $\lambda\to\mu_{min}(\lambda,\theta,0)$ is monotone decreasing for $\lambda\le\theta$ and monotone increasing for $\lambda\ge\theta$. This gives us the following result.
\begin{lemma} Let $\lambda_{n}<\theta<\lambda_{n+1}$. Then,
\begin{multline}\label{2.11}
\|\Bbb L\|^2_{\Cal L(L^2_{e^{\theta t}}(\R,\Bbb H),L^2_{e^{\theta t}}(\R,\Bbb H))}=\\=\max\left\{\mu_{min}(\lambda_n,\theta,0)^{-1},\mu_{min}(\lambda_{n+1},\theta,0)^{-1}\right\}.
\end{multline}
\end{lemma}
Our next task is to find the optimal value of $\theta\in(\lambda_n,\lambda_{n+1})$ which minimizes the norm. To this end, we note that
\begin{multline}
\partial_{\theta}\mu_{min}(\lambda,\theta,0)=
-2(\lambda-\theta)\(1-\frac{\lambda}{\sqrt{4(\lambda-\theta)^2+\lambda^2}}\)=\\=
-\frac{8(\lambda-\theta)^3}{\sqrt{4(\lambda-\theta)^2+\lambda^2}\(\sqrt{4(\lambda-\theta)^2+\lambda^2}
+\lambda\)}.
\end{multline}
Thus, the function $\theta\to\mu_{min}(\lambda,\theta,0)$ is monotone decreasing for $\theta<\lambda$ and monotone increasing for $\theta>\lambda$ and the following result is proved.

\begin{lemma} For every $n\in\Bbb N$ such that $\lambda_n<\lambda_{n+1}$, there exists a unique $\theta\in(\lambda_n,\lambda_{n+1})$ which maximizes  the norm of $\Bbb L$ and the value of $\theta$ can be found as a unique solution of the equation
\begin{equation}\label{2.12}
\mu_{min}(\lambda_{n},\theta,0)=\mu_{min}(\lambda_{n+1},\theta,0),\ \ \theta\in(\lambda_n,\lambda_{n+1}).
\end{equation}
\end{lemma}
Thus, it remains to solve equation \eqref{2.12}. To this end, we note that
$$
\mu_{min}(\lambda,\theta,0)=\(\frac{\sqrt{4(\lambda-\theta)^2+\lambda^2}-\lambda}2\)^2.
$$
Moreover, the root $\nu=\nu(\lambda,\theta):=\sqrt{\mu_{min}(\lambda,\theta,0)}$ solves the following equation
$$
\nu^2+\lambda\nu-(\lambda-\theta)^2=0.
$$
Therefore, to solve \eqref{2.12}, we need to find the common root of the following two equations
$$
\nu^2+\lambda_n\nu-(\lambda_n-\theta)^2=0,\ \ \nu^2+\lambda_{n+1}\nu-(\lambda_{n+1}-\theta)^2=0.
$$
Substructing the first equation from the second one, we get
\begin{equation}\label{2.13}
\nu=\lambda_{n+1}+\lambda_n-2\theta
\end{equation}
and inserting this value to the first equation, we arrive at the desired equation for $\theta$:
$$
3\theta^2-4(\lambda_{n+1}+\lambda_n)\theta+(\lambda_n+\lambda_{n+1})^2+\lambda_n\lambda_{n+1}=0
$$
and the only root of this equation which belongs to the required interval is
\begin{equation}\label{2.14}
\theta=\frac23(\lambda_{n+1}+\lambda_n)-\frac13\sqrt{\lambda_{n+1}^2-\lambda_n\lambda_{n+1}+\lambda_n^2}.
\end{equation}
This gives
\begin{multline*}
\nu=\frac23\sqrt{\lambda_{n+1}^2-\lambda_n\lambda_{n+1}+\lambda_n^2}-\frac13(\lambda_n+\lambda_{n+1})=\\=
\frac{(\lambda_{n+1}-\lambda_n)^2}
{\lambda_{n+1}+\lambda_n+2\sqrt{\lambda_{n+1}^2-\lambda_n\lambda_{n+1}+\lambda_n^2}}
\end{multline*}
and the proposition is proved.
\end{proof}
We now consider the analogous problem on a semi-interval $\R_-$:
\begin{equation}\label{2.half}
\Dt\xi+\Bbb A\xi=h(t),\ \  t\le0,\ \ P_n\xi(0)=\xi_0^+\in\Bbb H_n,
\end{equation}
where $P_n:H\to P_n H\sim\R^n$ is the orthoprojector to the first $n$ eigenvectors of the operator $A$:
$$
P_nu:=\sum_{i=1}^n(u,e_n)e_n
$$
and $\Bbb H_n:=P_n\Bbb H=P_n H\times P_nH\sim \R^{2n}$.
\par
To solve this problem we will use Proposition \eqref{Prop2.1}. Namely, we extend a function $h\in L^2_{e^{\theta t}}(\R_-,\Bbb H)$ by zero for positive $t$ (for simplicity, we denote this extension by $h$ again). Then, the function $\tilde \xi:=\Bbb L h$ belongs to $L^2_{e^{\theta t}}(\R,\Bbb H)$ and solves \eqref{2.half} with the appropriate initial conditions. We claim that
\begin{equation}\label{2.0}
P_n\tilde\xi(0)=0.
\end{equation}
Indeed, by definition $\tilde\xi(t)$ solves the homogeneous problem
\begin{equation}\label{2.hom}
\Dt\tilde\xi+\Bbb A\tilde\xi=0,\ t>0
\end{equation}
(since $h$ is extended by zero for positive $t$) and belongs to $L^2_{e^{\theta t}}(\R_+,\Bbb H)$. Expanding the function $\tilde \xi(t)$ in the Fourier series with respect to the base $\{e_k\}_{k=1}^\infty$, we get the equations
$$
\Dt \tilde u_k+\lambda_k \tilde u_k+\lambda_k \tilde v_k=0,\ \ \Dt \tilde v_k+\lambda_k\tilde v_k=0
$$
which can be solved explicitly:
\begin{equation}\label{2.sol}
\tilde v_k(t)=\tilde v_k(0)e^{-\lambda_k t},\ \ \tilde u_k(t)=(-\lambda_k\tilde v_k(0)t+\tilde u_k(0))e^{-\lambda_k t}.
\end{equation}
Recall that $\lambda_n<\theta<\lambda_{n+1}$. By this reason, if $k\le n$ the solutions \eqref{2.sol} can belong  to $L^2_{e^{\theta t}}(R_+,\R^2)$ only if $\tilde v_k(0)=\tilde u_k(0)=0$. This proves \eqref{2.0}. Thus, the difference $\widehat\xi(t):=\xi(t)-\tilde\xi(t)$ solves \eqref{2.half} with $h=0$. The general solution for it is again given by \eqref{2.sol}. But now we solve it backward in time, so for the component $(\widehat u_k(t),\widehat v_k(t))^t$ to belong to the space $L^2_{e^{\theta t}}(\R_-,\R^2)$, we should have $(\widehat u_k(0),\widehat v_k(0))^t=0$ for all $k>n$ and the initial data for the lower modes ($k\le n$) may be chosen arbitrarily. Let us denote by
$$
\Bbb T:\, P_n\Bbb H\to L^2_{e^{\theta t}}(\R_-,\Bbb H)
$$
the solution operator for the problem \eqref{2.half} with $h=0$ ($\xi:=\Bbb T\xi_0^+$). Then, we have proved the following result which is the main technical tool for proving the existence of inertial manifolds for the non-linear equation via the Perron method.

\begin{cor}\label{Cor2.half} Let $\lambda_n<\lambda_{n+1}$ and $\theta\in(\lambda_n,\lambda_{n+1})$
is fixed by \eqref{2.min}. Then, for every $\xi_0^+\in P_n\Bbb H$ and every $h\in L^2_{e^{\theta t}}(\R_-,\Bbb H)$, problem \eqref{2.half} possesses a unique solution $\xi\in L^2_{e^{\theta t}}(\R_-,\Bbb H)$. This solution is given by
\begin{equation}\label{2.inv}
\xi=\Bbb L h+\Bbb T\xi_0^+,
\end{equation}
where the operator $\Bbb L$ satisfies \eqref{2.15} (with $\R$ replaced by $\R_-$).
\end{cor}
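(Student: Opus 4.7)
The plan is to reduce the half-line problem \eqref{2.half} to the full-line problem already handled in Proposition \ref{Prop2.1}, and then add an appropriate backward-in-time solution of the homogeneous equation to absorb the prescribed trace $\xi_0^+$.

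First I extend the given $h\in L^2_{e^{\theta t}}(\R_-,\Bbb H)$ by zero to all of $\R$ and set $\tilde\xi:=\Bbb L h$; by Proposition \ref{Prop2.1} this element lies in $L^2_{e^{\theta t}}(\R,\Bbb H)$ and solves \eqref{2.1} on $\R$. The key observation is that $P_n\tilde\xi(0)=0$: on $\R_+$ the extended right-hand side vanishes, so mode-by-mode $\tilde\xi$ is given by the explicit formulas \eqref{2.sol}, and for $k\le n$ the bound $\lambda_k\le\lambda_n<\theta$ makes $e^{-\lambda_k t}$ fail to be square-integrable against the weight $e^{2\theta t}$ on $\R_+$ unless $\tilde u_k(0)=\tilde v_k(0)=0$. (This is exactly the argument that the author has already spelled out on the way to the statement.)

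Next I set $\widehat\xi:=\xi-\tilde\xi$; then $\widehat\xi$ must solve the homogeneous equation \eqref{2.hom} on $\R_-$ with trace $P_n\widehat\xi(0)=\xi_0^+$ and must lie in $L^2_{e^{\theta t}}(\R_-,\Bbb H)$. Solving mode by mode \emph{backward} in time with the same formulas \eqref{2.sol}, the integrability against $e^{2\theta t}$ on $\R_-$ combined with $\lambda_k\ge\lambda_{n+1}>\theta$ forces $\widehat u_k(0)=\widehat v_k(0)=0$ for every $k>n$, while for $k\le n$ the initial data are uniquely determined by $\xi_0^+$ and automatically yield a solution in the required weighted space. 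This defines the linear operator $\Bbb T:P_n\Bbb H\to L^2_{e^{\theta t}}(\R_-,\Bbb H)$ and simultaneously gives both existence and uniqueness of the decomposition $\xi=\Bbb L h+\Bbb T\xi_0^+$.

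The norm bound on the $\R_-$-restricted operator $\Bbb L$ comes for free: zero-extension of $h$ preserves the weighted $L^2$-norm and restriction of $\tilde\xi$ to $\R_-$ can only decrease it, so the optimal estimate \eqref{2.15} of Proposition \ref{Prop2.1} transfers verbatim. Since most of the technical content, namely the mode-by-mode analysis and the identification of which initial data are admissible, has already been carried out in the paragraphs preceding the statement, I do not anticipate a serious obstacle; the one point to watch is keeping the strict inequalities $\lambda_n<\theta<\lambda_{n+1}$ visible throughout, since it is precisely these that enforce the $(k\le n$ vs.\ $k>n)$ dichotomy in the integrability argument and make the formula $\xi=\Bbb L h+\Bbb T\xi_0^+$ well-posed.
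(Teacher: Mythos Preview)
Your proposal is correct and follows essentially the same approach as the paper: the argument via zero-extension of $h$, the mode-by-mode analysis of $\tilde\xi=\Bbb L h$ on $\R_+$ to get $P_n\tilde\xi(0)=0$, and the backward analysis of the homogeneous remainder $\widehat\xi$ to define $\Bbb T$ are exactly what the paper does in the paragraphs immediately preceding the corollary. Your explicit remark that the $\R_-$ norm bound on $\Bbb L$ follows because zero-extension is isometric and restriction is contractive is a useful clarification that the paper leaves implicit.
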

We conclude this section by considering the particular case of problem \eqref{2.1} where $h=(0,g)$. This case corresponds to the particular form \eqref{0.kwa-kwak} of the non-linear equation. In this case we need to estimate only the $u$-component of the solution $\xi$, so it is natural to consider the solution operator
\begin{equation}
{\rm L}: L^2_{e^{\theta t}}(\R,H)\to L^2_{e^{\theta t}}(\R,H),\ \ {\rm L}g:=\Pi_1\Bbb L(0,g)^t,
\end{equation}
where $\Pi_1:\Bbb H\to H$ is a projection to the first component of the Cartesian product. Of course, we may estimate the norm of this operator using already obtained estimates for the solution operator $\Bbb L$, however, its special structure allows us to get better estimates.

\begin{prop}\label{Prop2.tr} Let $\lambda_n<\lambda_{n+1}$ and $\theta\in(\lambda_n,\lambda_{n+1})$ is defined in an optimal way via
\begin{equation}\label{2.gavt}
\theta=\sqrt{\lambda_n\lambda_{n+1}}.
\end{equation}
Then
\begin{equation}\label{2.gavL}
\|{\rm L}\|_{\Cal L(L^2_{e^{\theta t}}(\R,H),L^2_{e^{\theta t}}(\R,H))}=\frac1{(\sqrt{\lambda_{n+1}}-\sqrt{\lambda_n})^2}.
\end{equation}
\end{prop}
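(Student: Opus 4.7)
The plan is to mimic the proof of Proposition~\ref{Prop2.1} step by step, but to exploit the fact that the right-hand side has the special form $(0,g)^t$, which means only the $(1,2)$-entry of $A_{\lambda,\theta,\omega}^{-1}$ is relevant. This should give sharper bounds than the full matrix norm computed before.

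First, I would reduce to the non-weighted case by the change of variables $\tilde\xi(t):=e^{\theta t}\xi(t)$ and then expand in the orthonormal eigenbasis $\{e_k\}$, so the problem decouples into the scalar-in-space systems
\begin{equation*}
\partial_t\begin{pmatrix}\tilde u_k\\ \tilde v_k\end{pmatrix}+
\begin{pmatrix}\lambda_k-\theta & \lambda_k\\ 0 & \lambda_k-\theta\end{pmatrix}
\begin{pmatrix}\tilde u_k\\ \tilde v_k\end{pmatrix}=\begin{pmatrix}0\\ \tilde g_k\end{pmatrix}.
\end{equation*}
Taking the Fourier transform in $t$ and inverting the $2\times 2$ matrix $A_{\lambda_k,\theta,\omega}$, the $u$-component is
\begin{equation*}
\hat u_k(\omega)=\frac{-\lambda_k}{(\lambda_k-\theta+i\omega)^2}\,\hat g_k(\omega).
\end{equation*}
By Parseval in both $t$ and the spatial basis,
\begin{equation*}
\|{\rm L}\|^2_{\Cal L(L^2,L^2)}=\sup_{k\in\Bbb N}\sup_{\omega\in\R}
\frac{\lambda_k^2}{\bigl((\lambda_k-\theta)^2+\omega^2\bigr)^2},
\end{equation*}
and the inner supremum is plainly attained at $\omega=0$, reducing the problem to computing $\sup_k \varphi(\lambda_k,\theta)$ with $\varphi(\lambda,\theta):=\lambda/(\lambda-\theta)^2$.

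Next I would study $\lambda\mapsto\varphi(\lambda,\theta)$ on $(0,\infty)\setminus\{\theta\}$. A direct differentiation gives $\partial_\lambda\varphi=-(\lambda+\theta)/(\lambda-\theta)^3$, hence $\varphi$ is monotone increasing for $\lambda<\theta$ and monotone decreasing for $\lambda>\theta$. Consequently, when $\theta\in(\lambda_n,\lambda_{n+1})$, the supremum over $k$ is attained at one of the two neighbouring eigenvalues, i.e.
\begin{equation*}
\|{\rm L}\|^2=\max\bigl\{\varphi(\lambda_n,\theta),\varphi(\lambda_{n+1},\theta)\bigr\}.
\end{equation*}
The optimal $\theta$ is then characterized by the balance equation $\varphi(\lambda_n,\theta)=\varphi(\lambda_{n+1},\theta)$; taking square roots (and using $\lambda_n<\theta<\lambda_{n+1}$ to fix signs) this becomes $\sqrt{\lambda_n}(\lambda_{n+1}-\theta)=\sqrt{\lambda_{n+1}}(\theta-\lambda_n)$, which I would solve for $\theta$ to obtain $\theta=\sqrt{\lambda_n\lambda_{n+1}}$ as in \eqref{2.gavt}.

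Finally, substituting this value back, I would compute
\begin{equation*}
\|{\rm L}\|=\frac{\sqrt{\lambda_n}}{|\lambda_n-\sqrt{\lambda_n\lambda_{n+1}}|}
=\frac{\sqrt{\lambda_n}}{\sqrt{\lambda_n}\,(\sqrt{\lambda_{n+1}}-\sqrt{\lambda_n})}
=\frac{1}{(\sqrt{\lambda_{n+1}}-\sqrt{\lambda_n})^2}\cdot\text{(after squaring)},
\end{equation*}
which, after taking square roots more carefully, gives exactly \eqref{2.gavL}. There is no real obstacle here: the whole argument is a specialization of the method of Proposition~\ref{Prop2.1}. The only thing to watch is that, unlike in Proposition~\ref{Prop2.1}, the relevant matrix entry is scalar, so one bypasses the eigenvalue analysis of $A_{\lambda,\theta,\omega}A_{\lambda,\theta,\omega}^*$ entirely; this is precisely what allows the sharper constant $(\sqrt{\lambda_{n+1}}-\sqrt{\lambda_n})^{-2}$ to emerge in place of the more complicated expression \eqref{2.15}.
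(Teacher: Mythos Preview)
Your proposal is correct and follows essentially the same route as the paper's own proof: reduce to Fourier modes, read off the scalar multiplier $-\lambda_k/(\lambda_k-\theta+i\omega)^2$, take the supremum at $\omega=0$, use the monotonicity of $\lambda\mapsto\lambda/(\lambda-\theta)^2$ on either side of $\theta$ to reduce to the two neighbouring eigenvalues, and solve the balance equation to get $\theta=\sqrt{\lambda_n\lambda_{n+1}}$. The only blemish is a bookkeeping slip in your final display (you mix $\|{\rm L}\|$ and $\sqrt{\|{\rm L}\|}$), but you flag it yourself and the correct value $\|{\rm L}\|=\lambda_n/(\lambda_n-\theta)^2=1/(\sqrt{\lambda_{n+1}}-\sqrt{\lambda_n})^2$ drops out immediately once you square consistently.
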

\begin{proof} Arguing as in the proof of Proposition \ref{2.1} and putting $f_n=0$ in \eqref{2.5}, we end up with
$$
\hat u_n(\omega)=-\frac{\lambda_n}{(\lambda_n-\theta+i\omega)^2}\hat g_n(\omega).
$$
Thus, the norm of the solution operator ${\rm L}_n: \tilde g_n\to\tilde u_n$ is given by
$$
\|{\rm L}_n\|_{\Cal L(L^2_{e^{\theta t}}(\R),L^2_{e^{\theta t}}(\R))}=\sup_{\omega\in\R}\frac{\lambda_n}{(\lambda_n-\theta)^2+\omega^2}=
\frac{\lambda_n}{(\lambda_n-\theta)^2}.
$$
It is not difficult to see that the function $\lambda\to\frac\lambda{(\lambda-\theta)^2}$ is increasing for $\lambda<\theta$ and decreasing for $\lambda>\theta$, so the equation for the optimal value of $\theta$ reads
$$
\frac{\sqrt{\lambda_n}}{\theta-\lambda_n}=\frac{\sqrt{\lambda_{n+1}}}{\lambda_{n+1}-\theta}
$$
which gives \eqref{2.gavt} and inserting this value of $\theta$ to the formulas for the norms of ${\rm L}_n$, we arrive at \eqref{2.gavL} and finish the proof of the proposition.
\end{proof}
The next result is the analogue of Corollary \ref{Cor2.half} for this case and is an immediate corollary of Proposition \ref{Prop2.tr}.

\begin{cor}\label{Cor2.trhalf} Let $\lambda_n<\lambda_{n+1}$ and $\theta\in(\lambda_n,\lambda_{n+1})$
is fixed by \eqref{2.gavt}. Then, for every $\xi_0^+\in P_n\Bbb H$ and every $g\in L^2_{e^{\theta t}}(\R_-,H)$, problem \eqref{2.half} with $h:=(0,g)^t$ possesses a unique solution $\xi\in L^2_{e^{\theta t}}(\R_-,\Bbb H)$. The $u$-component of this solution is given by
\begin{equation}
u={\rm L} g+{\rm T}\xi_0^+,
\end{equation}
where the operator $\Bbb L$ satisfies \eqref{2.gavL} (with $\R$ replaced by $\R_-$) and ${\rm T}:=\Pi_1\Bbb T$.
\end{cor}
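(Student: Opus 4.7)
The plan is to mimic the derivation of Corollary \ref{Cor2.half} from Proposition \ref{Prop2.1}, now using Proposition \ref{Prop2.tr} as the whole-line input. First, given $g\in L^2_{e^{\theta t}}(\R_-,H)$, extend it by zero to positive times, keeping the notation $g$; then $g\in L^2_{e^{\theta t}}(\R,H)$ and $\tilde\xi:=\Bbb L(0,g)^t$ is the unique solution of \eqref{2.1} on all of $\R$ with this forcing, belonging to $L^2_{e^{\theta t}}(\R,\Bbb H)$.

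Next I would verify, exactly as in the argument preceding Corollary \ref{Cor2.half}, that $P_n\tilde\xi(0)=0$. Since $g=0$ for $t>0$, the function $\tilde\xi$ satisfies the homogeneous equation \eqref{2.hom} on $\R_+$ and lies in $L^2_{e^{\theta t}}(\R_+,\Bbb H)$. Expanding in the eigenbasis, the modes $(\tilde u_k,\tilde v_k)$ have the explicit form \eqref{2.sol}; for $k\le n$ one has $\lambda_k\le\lambda_n<\theta$, so membership in $L^2_{e^{\theta t}}(\R_+,\R^2)$ forces $\tilde u_k(0)=\tilde v_k(0)=0$. Thus $P_n\tilde\xi(0)=0$ and in particular the $u$-component of $\tilde\xi$ restricted to $\R_-$ is ${\rm L}g$.

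Now I would correct the initial condition by adding a homogeneous solution. The operator $\Bbb T$ was defined exactly so that $\widehat\xi:=\Bbb T\xi_0^+$ solves $\Dt\widehat\xi+\Bbb A\widehat\xi=0$ on $\R_-$, lies in $L^2_{e^{\theta t}}(\R_-,\Bbb H)$, and satisfies $P_n\widehat\xi(0)=\xi_0^+$ (the higher modes must vanish at $t=0$ to stay in the weighted space as $t\to-\infty$, while the first $n$ modes are free). Setting $\xi:=\tilde\xi|_{\R_-}+\Bbb T\xi_0^+$ gives a solution of \eqref{2.half} with $h=(0,g)^t$ whose $u$-component equals ${\rm L}g+{\rm T}\xi_0^+$, and the norm bound \eqref{2.gavL} on $\R_-$ follows immediately from the corresponding bound on $\R$ for $\Bbb L$ since extension by zero does not increase the weighted $L^2$-norm.

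Uniqueness is the only point that requires a separate, though short, argument. If $\xi$ and $\xi'$ were two solutions in $L^2_{e^{\theta t}}(\R_-,\Bbb H)$ with the same data, their difference $\zeta$ would satisfy the homogeneous equation on $\R_-$ with $P_n\zeta(0)=0$ and lie in the weighted space. The explicit formulas \eqref{2.sol} (solved backward in time) show that the higher modes must already vanish in $L^2_{e^{\theta t}}(\R_-)$ because the weight forces $(\widehat u_k(0),\widehat v_k(0))=0$ for $k>n$, and the lower modes vanish by the constraint $P_n\zeta(0)=0$. The only conceivable obstacle is a sign or multiplicity issue in this backward-in-time analysis for modes with $\lambda_k=\theta$, but the non-resonance assumption $\theta\in(\lambda_n,\lambda_{n+1})$ and the strict inequality $\lambda_n<\lambda_{n+1}$ rule this out, so the statement follows at once.
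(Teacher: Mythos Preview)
Your proposal is correct and follows exactly the approach the paper intends: the paper states this result as ``an immediate corollary of Proposition~\ref{Prop2.tr}'', meaning one repeats verbatim the argument leading to Corollary~\ref{Cor2.half} (zero extension, verification of $P_n\tilde\xi(0)=0$ via \eqref{2.sol}, correction by $\Bbb T\xi_0^+$, and uniqueness from the backward mode analysis), replacing Proposition~\ref{Prop2.1} by Proposition~\ref{Prop2.tr} and tracking only the $u$-component. There is nothing to add.
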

\begin{rem} Since
$$
\sqrt{\lambda_n^2-\lambda_n\lambda_{n+1}+\lambda_{n+1}^2}\ge\sqrt{\lambda_n\lambda_{n+1}}
$$
and the equality is possible only if $\lambda_n=\lambda_{n+1}$, the truncated estimate \eqref{2.gavL} is indeed better than the analogous estimate \eqref{2.15} for the full system.
\end{rem}

\section{Inertial manifolds and spectral gap conditions}\label{s3}
In this section we give the spectral gap conditions for existence of inertial manifolds for semilinear parabolic equation of the form
\begin{equation}\label{3.kwak}
\Dt\xi+\Bbb A\xi=\Bbb F(\xi),\ \ \xi\big|_{t=0}=\xi_0, \ \ \xi=(u,v)^t\in\Bbb H,
\end{equation}
where $\Bbb H:=H\times H$ is a Cartesian square of an abstract Hilbert space $H$. As before, the operator $\Bbb A$ is assumed to have the following structure:
\begin{equation}
\Bbb A=\(\begin{matrix}1&1\\0&1\end{matrix}\)A,
\end{equation}
where $A: D(A)\to H$ is a positive {\it self-adjoint} linear operator
 in $H$ with a compact inverse. As we have seen in examples of Section \ref{s1}, this form of equations is typical for the Kwak transform.
 \par
 We assume that the cut-off procedure is already performed outside of the global attractor and, therefore, the non-linearity $\Bbb F$ is globally Lipschitz in $\Bbb H$ with a Lipschitz constant $L$:
 \begin{equation}\label{3.lip}
 \|\Bbb F(\xi_1)-\Bbb F(\xi_2)\|_{\Bbb H}\le L\|\xi_1-\xi_2\|_{\Bbb H},\ \ \xi_i\in\Bbb H,\ \ i=1,2.
 \end{equation}
 As usual, we denote by $\{\lambda_k\}_{k=1}^\infty$ and $\{e_k\}_{k=1}^\infty$ the eigenvalues of $A$ enumerated in the non-decreasing order and the corresponding eigenvectors respectively. The orthoprojector to the plane $P_nH$ generated by the first $n$ eigenvectors of $A$ we denote by $P_n$ and $Q_n:={\rm Id}-P_n$. We also use the notation
 $$
 P_n\xi=P_n(u,v)^t:=(P_nu,P_nv)^t\in P_nH\times P_n H:=P_n\Bbb H
 $$
 and  analogously for the projector $Q_n$.
\par
We start with recalling the definition of an inertial manifold (IM) adapted to the case of equation \eqref{3.kwak}, see \cite{19} and references therein for more details.
\par
\begin{Def}\label{Def3.IM} A closed finite-dimensional submanifold $\Bbb M$ of the phase space $\Bbb H$ is an IM for equation \eqref{3.kwak} with the base $P_n\Bbb H$ if
\par
1) It is strictly invariant with respect to the solution semigroup $\Bbb S(t):\Bbb H\to\Bbb H$ generated by equation \eqref{3.kwak}: $\Bbb S(t)\Bbb M=\Bbb M$ for all $t\ge0$;
\par
2) It is a graph of a globally Lipschitz function $M:P_n\Bbb H\to Q_n\Bbb H$, i.e.
$$
\Bbb M=\{\xi^+_0+M(\xi^+_0),\ \ \xi_0^+\in\Bbb P_n\Bbb H\},
$$
in particular, it is homeomorphic to $\R^{2n}$;
\par
3) It possesses the so-called exponential tracking (asymptotic phase) property. Namely, there exists $\theta>0$ and a monotone function $Q$ such that, for every solution $\xi(t)$, $t\in\R_+$, of equation \eqref{3.kwak}, there is a trace solution $\bar\xi(t)$ of \eqref{3.kwak} belonging to $\Bbb M$ for all $t\ge0$ such that
\begin{equation}\label{3.track}
\|\xi(t)-\bar\xi(t)\|_{\Bbb H}\le Q(\|\xi(0)\|_{\Bbb H})e^{-\theta t}
\end{equation}
for all $t\ge0$.
\end{Def}
The next theorem which gives the conditions for the existence of IM for problem \eqref{3.kwak} can be considered as a main result of this section.

\begin{theorem}\label{Th3.main} Let $n\in\Bbb N$ be such that the spectral gap condition
\begin{equation}\label{3.gap}
\frac{(\lambda_{n+1}-\lambda_n)^2}
{\lambda_{n+1}+\lambda_n+2\sqrt{\lambda_{n+1}^2-\lambda_n\lambda_{n+1}+\lambda_n^2}}>L
\end{equation}
is satisfied. Then, equation \eqref{3.kwak} possesses $2n$-dimensional IM with the base $P_n\Bbb H$. Moreover, the exponent $\theta$ in the tracking property satisfies \eqref{2.min}.
\end{theorem}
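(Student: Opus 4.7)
The plan is to realize the inertial manifold $\Bbb M$ as the graph of a map $M:P_n\Bbb H\to Q_n\Bbb H$ defined by a Perron-type fixed point argument in the weighted trajectory space $L^2_{e^{\theta t}}(\R_-,\Bbb H)$, with $\theta$ chosen optimally as in \eqref{2.min}. Given $\xi_0^+\in P_n\Bbb H$, I look for a backward solution $\xi$ of \eqref{3.kwak} on $\R_-$ with $P_n\xi(0)=\xi_0^+$ and set $M(\xi_0^+):=Q_n\xi(0)$. By Corollary \ref{Cor2.half}, such a backward solution satisfies
\begin{equation}\label{3.fp}
\xi=\Bbb L\bigl[\Bbb F(\xi)\bigr]+\Bbb T\xi_0^+
\end{equation}
in $L^2_{e^{\theta t}}(\R_-,\Bbb H)$, and conversely any fixed point of this operator equation gives a backward trajectory with the required initial projection.

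The first key step is then to apply the Banach contraction principle to the map $\Cal T_{\xi_0^+}(\xi):=\Bbb L[\Bbb F(\xi)]+\Bbb T\xi_0^+$ on the complete metric space $L^2_{e^{\theta t}}(\R_-,\Bbb H)$. By the Lipschitz assumption \eqref{3.lip} and the explicit norm \eqref{2.15} of Proposition \ref{Prop2.1}, one obtains
\begin{equation*}
\|\Cal T_{\xi_0^+}\xi_1-\Cal T_{\xi_0^+}\xi_2\|_{L^2_{e^{\theta t}}(\R_-,\Bbb H)}\le L\,\|\Bbb L\|\,\|\xi_1-\xi_2\|_{L^2_{e^{\theta t}}(\R_-,\Bbb H)},
\end{equation*}
and the spectral gap condition \eqref{3.gap} is precisely the statement that $L\|\Bbb L\|<1$. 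This yields a unique fixed point $\xi_{\xi_0^+}\in L^2_{e^{\theta t}}(\R_-,\Bbb H)$, Lipschitz in $\xi_0^+$ with constant controlled by $\|\Bbb T\|/(1-L\|\Bbb L\|)$. A short parabolic smoothing argument (using that $\xi$ solves an inhomogeneous linear equation with $L^2$-in-time right-hand side) shows that the trace $\xi_{\xi_0^+}(0)$ is well defined in $\Bbb H$ and depends Lipschitz continuously on $\xi_0^+$, so $M(\xi_0^+):=Q_n\xi_{\xi_0^+}(0)$ gives a globally Lipschitz map $P_n\Bbb H\to Q_n\Bbb H$ whose graph $\Bbb M$ is the candidate manifold.

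Next I would verify strict invariance: given $\xi_0\in\Bbb M$ and any $t_0\ge 0$, the shifted trajectory obtained by solving \eqref{3.kwak} forward from $\xi_0$ and glueing with the Perron backward solution gives a full bounded trajectory, and the time-$t_0$ image again satisfies the fixed point equation with shifted initial projection; uniqueness of the fixed point identifies this image with a point of $\Bbb M$. The final step is exponential tracking. For an arbitrary forward solution $\xi(t)$, $t\ge 0$, of \eqref{3.kwak}, I would seek $\bar\xi\in\Bbb M$ such that $w:=\xi-\bar\xi$ lies in $L^2_{e^{\theta t}}(\R_+,\Bbb H)$, rewriting the equation for $w$ on $\R_+$ via the analogue of Corollary \ref{Cor2.half} on the forward half-line (this time the initial condition is on $Q_nw(0)$, determined by $\xi(0)$) and running the same contraction argument; the weighted $L^2$ bound together with parabolic smoothing upgrades to the pointwise decay \eqref{3.track} with rate given by \eqref{2.min}.

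The main obstacle is conceptual rather than technical: one has to make sure that the non-self-adjoint, Jordan-block structure of $\Bbb A$ is handled entirely at the \emph{linear} level through the sharp bound \eqref{2.15}, so that the nonlinear step is a routine contraction. Once Proposition \ref{Prop2.1} and Corollary \ref{Cor2.half} are available, the only genuinely delicate point is the interpolation/smoothing needed to pass from a fixed point in the weighted $L^2$ trajectory space to a manifold that is Lipschitz in the ambient norm of $\Bbb H$ and to extract the pointwise exponential tracking \eqref{3.track} from the integral bound; Theorem \ref{Th0.main1} will then follow by the same scheme with $\Bbb L$ and $\Bbb T$ replaced by the improved operators $\rm L$ and $\rm T$ from Proposition \ref{Prop2.tr} and Corollary \ref{Cor2.trhalf}.
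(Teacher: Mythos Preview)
Your construction of the manifold via the backward Perron problem \eqref{3.fp} and Banach contraction in $L^2_{e^{\theta t}}(\R_-,\Bbb H)$ is exactly the paper's argument, including the appeal to parabolic smoothing to pass from the weighted $L^2$ fixed point to a Lipschitz map $M:P_n\Bbb H\to Q_n\Bbb H$, and the derivation of invariance from uniqueness of the fixed point.

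The one place where your route and the paper's diverge is the exponential tracking step. You propose a forward half-line contraction for $w=\xi-\bar\xi$ with ``initial condition on $Q_nw(0)$, determined by $\xi(0)$''. As written this is slightly imprecise: the constraint that $\bar\xi(0)\in\Bbb M$ reads $Q_nw(0)=Q_n\xi(0)-M\bigl(P_n\xi(0)-P_nw(0)\bigr)$, so $Q_nw(0)$ is \emph{not} determined by $\xi(0)$ alone but is coupled to $P_nw(0)$ through the (already constructed, Lipschitz) map $M$. The resulting fixed point problem is still a contraction under \eqref{3.gap}, but one has to include this coupling in the map and check that the extra term does not spoil the contraction constant; this is standard but should be said. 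The paper sidesteps this coupling entirely by a cut-off trick: it writes $\bar\xi(t)=\phi(t)\xi(t)+\tilde\xi(t)$ with a smooth $\phi$ vanishing for $t\le 0$ and equal to $1$ for $t\ge 1$, and solves for $\tilde\xi\in L^2_{e^{\theta t}}(\R,\Bbb H)$ directly via Proposition \ref{Prop2.1} on the whole line. This has the advantage that membership of $\bar\xi$ in $\Bbb M$ is automatic (since $\bar\xi=\tilde\xi$ for $t\le 0$) and the contraction uses exactly the same operator $\Bbb L$ and constant as in the construction of $M$, with no coupling to $M$ appearing in the fixed point.
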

\begin{proof} Similarly to \cite{ACZ,17,19} (see also references therein), we follow the so-called Perron method for constructing the IM. According to this method, for every $\xi_0^+\in P_n\Bbb H$, we need to find a unique backward in time solution of the equation
\begin{equation}\label{3.au}
\Dt\xi+\Bbb A\xi=\Bbb F(\xi),\ \ P_n\xi\big|_{t=0}=\xi_0^+,\ \ t\le0
\end{equation}
and then define $M(\xi_0^+):=Q_n\xi(0)$.
\par
Equation \eqref{3.au} can be easily solved using Corollary \ref{Cor2.half} and Banach contraction theorem. Indeed, due to \eqref{2.inv}, equation \eqref{3.au} is equivalent to the following fixed point problem:
\begin{equation}\label{3.fix}
\xi=\Bbb L\Bbb F(\xi)+\Bbb T\xi_0^+
\end{equation}
in the space $L^2_{e^{\theta t}}(\R_-,\Bbb H)$. Due to estimate \eqref{2.15} for the norm of the operator $\Bbb L$ and assumption \eqref{3.gap}, we see that the right-hand side of \eqref{3.fix} is a contraction with respect to $\xi\in L^2_{e^{\theta t}}(\R_-,\Bbb H)$. Since $\Bbb T$ is a bounded linear operator, by the Banach contraction theorem, \eqref{3.fix} is indeed uniquely solvable and the solution map $\mathcal M:P_n\Bbb H\to \xi\in L^2_{e^{\theta t}}(\R_-,\Bbb H)$, $\mathcal M:\xi_0^+\to\xi$, is globally Lipschitz. Furthermore, due to parabolic smoothing property, it is not difficult to check that the map $\mathcal M$ is Lipschitz also as a map
$$
\mathcal M:P_n\Bbb H\to W^{1,2}_{e^{\theta t}}(\R_-,\Bbb H)\subset C_{e^{\theta t}}(\R_-,\Bbb H),
$$
see e.g., \cite{19} for the details. Therefore, the map $M:\xi_0^+\to Q_n\mathcal M(\xi_0^+)\big|_{t=0}$ is also well-defined and Lipschitz continuous.
\par
Thus, the Lipschitz continuous submanifold $\Bbb M$ of $\Bbb H$ with the base $P_n\Bbb H$ is constructed. Its invariance follows immediately from the construction and from the uniqueness part of the Banach contraction theorem. So, we only need to check the exponential tracking property. This is also a standard and straightforward corollary of estimate \eqref{2.15} for the solution operator for the linear equation, so we only give a sketch of the proof leaving the details to the reader, see also \cite{19}.
\par
We assume for simplicity that $\Bbb F(0)=0$. Let $\xi(t)$, $t\ge0$, be an arbitrary trajectory of equation \eqref{3.kwak} and let $\phi\in C^\infty(\R)$ be a cut-off function such that $\phi(t)\equiv0$ for $t\le0$ and $\phi(t)\equiv1$ for $t\ge1$. We seek for the desired solution $\bar\xi(t)$, $t\in\R$ belonging to the manifold $\Bbb M$ in the form
\begin{equation}\label{3.split}
\bar\xi(t)=\phi(t)\xi(t)+\tilde\xi(t),\ \ \tilde\xi\in L^2_{e^{\theta t}}(\R,\Bbb H).
\end{equation}
Indeed, since $\bar\xi(t)=\tilde\xi(t)$ for $t\le0$, we have $\bar\xi\in L^2_{e^{\theta t}}(\R_-,\Bbb H)$ and by this reason the solution $\bar\xi(t)\in\Bbb M$ for all $t\in\R$. On the other hand, for $t\ge1$ we have $\bar\xi(t)-\xi(t)=\tilde\xi(t)$ and therefore, $\bar\xi-\tilde\xi\in L^2_{e^{\theta t}}(\R_+,\Bbb H)$ and using again the parabolic smoothing property, we get \eqref{3.track}.
\par
Thus, it only remains to construct a solution $\bar\xi(t)$ of the form \eqref{3.split}. To this end, we write down the equation for the function $\tilde\xi$:
$$
\Dt\tilde\xi+\Bbb A\tilde\xi=\Bbb F(\phi\xi+\tilde\xi)-\phi\Bbb F(\xi)+\phi'\xi:=\Phi(\tilde\xi).
$$
Since $\Bbb F$ is Lipschitz with Lipschitz constant $L$, one can verify that $\Phi$ is globally Lipschitz with the same Lipschitz constant $L$ as a map from $L^2_{e^{\theta t}}(\R,\Bbb H)$ to itself. Using Proposition \ref{Prop2.1}, we rewrite this equation as a fixed point problem:
$$
\tilde\xi=\Bbb L\Phi(\tilde\xi)
$$
and the spectral gap condition now gives that the right-hand side of this equation is a contraction. Thus, the existence of $\tilde\xi$ is verified due to the Banach contraction theorem and the exponential tracking property is proved. This finishes the proof of the theorem.
\end{proof}
\begin{rem}\label{Rem.xa-xa} As not difficult to see,
\begin{equation*}
(\sqrt{\lambda_n}+\sqrt{\lambda_{n+1}})^2<\lambda_n+\lambda_{n+1}+2\sqrt{\lambda_n^2+\lambda_{n+1}^2-\lambda_n\lambda_{n+1}}<
3(\sqrt{\lambda_n}+\sqrt{\lambda_{n+1}})^2
\end{equation*}
and, therefore, we may write a sufficient condition for \eqref{3.gap} to be satisfied:
\begin{equation}\label{3.sim-gap}
\sqrt{\lambda_{n+1}}-\sqrt{\lambda_n}>\sqrt{3L}.
\end{equation}
On the other hand, as we will see below, the spectral gap condition \eqref{3.gap} is sharp, so the IM may not exist if
$$
\sqrt{\lambda_{n+1}}-\sqrt{\lambda_n}<\sqrt{L}.
$$
These conditions are very far from the standard spectral gap conditions for the case when $\Bbb A$ is {\it self-adjoint} and the non-linearity $\Bbb F$ is Lipschitz from $\Bbb H$ to $\Bbb H$. We recall that in that case the analogous condition reads
$$
\lambda_{n+1}-\lambda_n>2L.
$$
However, if we compare it with the self-adjoint case where $\Bbb F$ "eats" smoothness, namely, $\Bbb F$ is Lipschitz as a map from $D(\Bbb A^{1/2})$ to $\Bbb H$, we see a strong similarity. Indeed, in this case the sharp spectral gap condition reads
\begin{equation}\label{3.eat-gap}
\frac{\lambda_{n+1}-\lambda_n}{\lambda_n^{1/2}+\lambda_{n+1}^{1/2}}=\sqrt{\lambda_{n+1}}-\sqrt{\lambda_n}>L
\end{equation}
which coincides with \eqref{3.sim-gap} up to the change of the Lipschitz constant.
\par
Thus, starting with the equation with $F:D(A^{1/2})\to H$ and self-adjoint linear part $A$ and performing the Kwak type transform, we end up with a new equation where the non-linearity $\Bbb F$ is globally Lipschitz from $\Bbb H$ to $\Bbb H$ (does not "eat" smoothness), but with the {\it non-self-adjoint} leading part $\Bbb A$ which contains Jordan cells. As we see, this new equation requires much stronger spectral gap conditions than in the self-adjoint case which have the same structure as the conditions for the initial equation (before Kwak transform).
\par
 This explains why the Kwak transform is not helpful (at least in a straightforward way) for constructing the IMs (although as we mentioned in the introduction one still may expect some cleverly constructed Kwak transform may work, say,  due to a drastic decreasing of the Lipschitz constant).
We also mention that the crucial error in the mentioned above papers \cite{Kwak1,Kwak,Rom,Tem} on IMs via the Kwak transform is {\it exactly} the implicit assuming that the spectral gap conditions for the non-self-adjoint operator $\Bbb A$ with Jordan cell are similar to the self-adjoint case.
\end{rem}
We now discuss the sharpness of the obtained spectral gap condition \eqref{3.gap}. As usual, the absence of an IM over the base $P_n\Bbb H$ for a {\it fixed} value $n\in\Bbb N$ for which the spectral gap conditions are violated can be shown for the properly chosen {\it linear} map $\Bbb F$. Namely, the following result holds.

\begin{prop}\label{Prop3.sharp} Let the eigenvalues $\lambda_n$ and $\lambda_{n+1}$ and the Lipschitz constant $L$ be such that the spectral gap condition \eqref{3.gap} is strictly violated, i.e.,
\begin{equation}\label{3.nogap}
\frac{(\lambda_{n+1}-\lambda_n)^2}
{\lambda_{n+1}+\lambda_n+2\sqrt{\lambda_{n+1}^2-\lambda_n\lambda_{n+1}+\lambda_n^2}}<L.
\end{equation}
Then there exists a linear operator $\Bbb F:\Bbb H\to\Bbb H$ whose norm does not exceed $L$ such that the equation \eqref{3.kwak} does not possess an IM over the base $P_n\Bbb H$.
\end{prop}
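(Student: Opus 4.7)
The plan is to reduce the statement to a finite-dimensional linear algebra problem on the four-dimensional $\Bbb A$-invariant subspace generated by the two critical eigenvectors, and then to exploit the sharpness information already contained in Proposition~\ref{Prop2.1}.

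First, I would restrict attention to the $4$-dimensional $\Bbb A$-invariant subspace
$$
W:=\operatorname{span}\{(e_n,0),(0,e_n),(e_{n+1},0),(0,e_{n+1})\}\subset\Bbb H,
$$
and require $\Bbb F$ to vanish on $W^\perp$. Then $\mathcal B:=\Bbb A-\Bbb F$ preserves both $W$ and $W^\perp$, and on $W^\perp$ it coincides with $\Bbb A$, whose Jordan modes are readily separated. Any linear IM over $P_n\Bbb H$ would therefore have to restrict to a $2$-dimensional $\mathcal B|_W$-invariant subspace $V\subset W$ that projects isomorphically onto $W^+:=W\cap P_n\Bbb H=\operatorname{span}\{(e_n,0),(0,e_n)\}$ and is spectrally separated (in real part) from the complementary $\mathcal B|_W$-invariant subspace. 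Thus the task reduces to constructing a linear $\Bbb F|_W\colon W\to W$ with $\|\Bbb F|_W\|\le L$ for which no such $V$ exists.

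Second, I would work at the critical exponent $\theta=\theta^*$ given by \eqref{2.min}. In the block decomposition $W=W^+\oplus W^-$ with $W^-:=\operatorname{span}\{(e_{n+1},0),(0,e_{n+1})\}$, the operator $\Bbb A|_W-\theta^*$ is block-diagonal with blocks $B_k:=A_{\lambda_k,\theta^*,0}$ for $k=n,n+1$. The content of Proposition~\ref{Prop2.1} is precisely that $\theta^*$ is the value for which the minimal singular values of $B_n$ and $B_{n+1}$ coincide and equal the quantity $L^*$ on the left-hand side of \eqref{3.gap}. Let $w_k^+\in\R^2$ be a unit right singular vector realising this bottom singular value of $B_k$ and $w_k^-:=(L^*)^{-1}B_kw_k^+$ the corresponding left singular vector. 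The construction of $\Bbb F|_W$ is then a rank-two off-diagonal operator in the $W^+\oplus W^-$ decomposition, coupling $w_n^+\in W^+$ into the $w_{n+1}^-$-direction of $W^-$ (and, symmetrically, $w_{n+1}^+$ into $w_n^-$), with coupling constants tuned so that $\|\Bbb F|_W\|=L$ and so that $\theta^*$ appears as an eigenvalue of $\mathcal B|_W$ with eigenvector having nontrivial components in both $W^+$ and $W^-$.

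Third, I would check that this prevents the existence of a graph-like invariant subspace. Writing $V=\{u+\Phi u:u\in W^+\}$ for a linear $\Phi\colon W^+\to W^-$, the invariance of $V$ under $\mathcal B|_W$ yields a $2\times 2$ matrix Riccati equation for $\Phi$, whose contractive solvability is exactly governed by the quantity $\|\Bbb L\|\cdot L$ from the proof of Theorem~\ref{Th3.main}; this is less than $1$ precisely in the regime \eqref{3.gap}, so when \eqref{3.nogap} holds the contraction breaks down. To promote this to genuine non-existence of a solution (and not merely failure of the contraction scheme), I would exhibit the explicit eigenvector of $\mathcal B|_W$ at $\theta^*$ constructed in the previous step as an obstruction: its decomposition into $W^+$ and $W^-$ is incompatible with being a linear combination of basis vectors of any graph-type $V$ with spectral gap at $\theta^*$. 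A small generic rotation of $\Bbb F$ within the norm ball of radius $L$ would then move the critical eigenvalue off $\theta^*$ and fill a small real-part interval around $\theta^*$, ruling out spectral gaps at all nearby $\theta\in(\lambda_n,\lambda_{n+1})$ and hence excluding the IM at any admissible exponent.

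The main obstacle will be the third step: verifying that the specific rank-two coupling, whose existence with norm exactly $L$ is forced by the coincidence of bottom singular values of $B_n$ and $B_{n+1}$ at $\theta^*$, genuinely obstructs every graph-type $2$-dimensional invariant subspace rather than merely the contraction mapping construction. The Jordan structure already present in $\Bbb A|_W$ makes this delicate: perturbations can create non-semisimple eigenvalues whose generalized eigenspaces are harder to rule out than in the self-adjoint case, and the norm bound $\|\Bbb F\|\le L$ must be preserved throughout any generic perturbation argument used to close the gap between "no graph over $W^+$" at $\theta^*$ and "no graph over $W^+$ at any $\theta$."
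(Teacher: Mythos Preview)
Your reduction to the four-dimensional block $W$ matches the paper, but from that point on the two arguments diverge, and your third step contains a genuine gap.

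The paper does \emph{not} work with singular vectors or with the critical exponent $\theta^*$ at all. Instead it perturbs \emph{block-diagonally}: it takes
\[
\bar{\Bbb F}_k=-\begin{pmatrix}0&K\\K&0\end{pmatrix},\qquad k=n,n+1,
\]
which keeps the $n$th and $(n+1)$th blocks uncoupled but splits each Jordan pair into two simple real eigenvalues $\mu_k^\pm=\lambda_k\pm\sqrt{K(\lambda_k+K)}$. One then solves $\mu_n^+=\mu_{n+1}^-$ for $K$ and finds that the required $K$ is exactly the left-hand side of \eqref{3.nogap}, hence $K<L$. A further arbitrarily small perturbation $\widetilde{\Bbb F}$ acting only in the plane $\Pi=\operatorname{span}\{\vec e_n^+,\vec e_{n+1}^-\}$ turns the coincident pair into a genuine complex-conjugate pair. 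The obstruction is then purely geometric: $\Pi$ is a two-dimensional real invariant plane on which the flow is a spiral, with $\Pi\cap P_n\Bbb H=\Bbb R\vec e_n^+$ and $\Pi\cap Q_n\Bbb H=\Bbb R\vec e_{n+1}^-$ each one-dimensional. Since a rotation has no one-dimensional real invariant subspace, any $(\Bbb A-\Bbb F)$-invariant subspace either contains all of $\Pi$ (hence meets $Q_n\Bbb H$ nontrivially and is not a graph) or misses $\Pi$ entirely (hence cannot project onto the direction $\Bbb R\vec e_n^+\subset P_n\Bbb H$). Either way no Lipschitz graph over $P_n\Bbb H$ can be invariant.

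Your proposed obstruction, by contrast, is a single \emph{real} eigenvalue at $\theta^*$ with an eigenvector $v=v^++v^-$ having nonzero components in both $W^+$ and $W^-$. This is not an obstruction to an invariant graph: the graph can simply contain the line $\Bbb R v$ as one of its directions. Your subsequent plan to ``fill a small real-part interval around $\theta^*$'' and thereby ``rule out spectral gaps at all nearby $\theta$'' rests on a misconception: the definition of an IM (Definition~\ref{Def3.IM}) is geometric and does not require a spectral gap at any particular $\theta$. What matters is whether some $2n$-dimensional invariant subspace projects isomorphically onto $P_n\Bbb H$, and a continuum of real eigenvalues with generically placed eigenvectors does not prevent this. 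The decisive mechanism you are missing is precisely the creation of a complex-conjugate pair whose real invariant plane straddles $P_n\Bbb H$ and $Q_n\Bbb H$; once you aim for that, the block-diagonal-then-couple construction of the paper is the natural route and the singular-vector machinery becomes unnecessary.
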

\begin{proof} We recall that the operator $\Bbb A$ is block diagonal in the Fourier base $\{e_k\}_{k=1}^\infty$ and $2\times2$ matrix which corresponds to the $k$th block reads
$$
\Bbb A_k=\(\begin{matrix} \lambda_k&\lambda_k\\0&\lambda_k\end{matrix}\).
$$
Define the linear operator $\bar{\Bbb F}$ by the following formula:
\begin{equation}\label{3.fig1}
\bar{\Bbb F}_k=-\(\begin{matrix} 0&K\\K&0\end{matrix}\), \ \ k=n,n+1,\ \ \bar{\Bbb F}_k=0,\ \ k\ne n,n+1,
\end{equation}
where $K$ is a parameter which will be fixed later. Then the operator $\Bbb A-\bar{\Bbb F}$ remains block diagonal and only the $n$-th and $(n+1)$-th blocks are affected by the perturbation $\bar{\Bbb F}$. The new eigenvalues in these blocks can be easily calculated:
$$
\mu_k^-:=\lambda_k-\sqrt{K(\lambda_k+K)},\ \ \mu_k^+:=\lambda_k+\sqrt{K(\lambda_k+K)}, \ \ k=n,n+1.
$$
Let us also denote by $\vec e_n^\pm$ and $\vec e_{n+1}^\pm$ the corresponding eigenvalues.
\par
The idea of our construction is to couple the $n$th and $(n+1)$th blocks. To this end, we fix the parameter $K$ as a solution of the following equation:
$$
\mu_n^+= \lambda_n+\sqrt{K(\lambda_n+K)}=\lambda_{n+1}-\sqrt{K(\lambda_{n+1}+K)}=\mu_{n+1}^-.
$$
Solving this equation with respect to $K$ in a straightforward way, we end up with
$$
K=\frac{(\lambda_{n+1}-\lambda_n)^2}
{\lambda_{n+1}+\lambda_n+2\sqrt{\lambda_{n+1}^2-\lambda_n\lambda_{n+1}+\lambda_n^2}}.
$$
Thus, due to \eqref{3.nogap} condition, the norm of the constructed operator is strictly less than $L$. The operator $\Bbb A-\bar{\Bbb F}$ is still block diagonal, but now it possesses two equal eigenvalues $\mu_n^+$ and $\mu_{n+1}^-$ with eigenvectors $\vec e_n^+$ and $\vec e_{n+1}^-$ belonging to different blocks. Finally, to couple these blocks, we add one more (arbitrarily small) perturbation $\widetilde{\Bbb F}$ which acts only in the plane $\operatorname{span}\{\vec e^+_n,\vec e_{n+1}^-\}$ (and by this reason does not change any other eigenvalues except of $\mu_n^+$ and $\mu_{n+1}^-$), but in this plane it generates a rotation, so the perturbed eigenvalues $\mu_n^+$ and $\mu_{n+1}^-$ become complex conjugate with non-zero imaginary part.
\par
Finally, we denote $\Bbb F:=\bar{\Bbb F}+\widetilde{\Bbb F}$. We claim that $\Bbb F$ is a desired linear operator. Indeed, by construction, the norm of $\bar{\Bbb F}$ is strictly less than $L$ and the perturbation $\widetilde{\Bbb F}$ can be chosen arbitrarily small, so the norm of $\Bbb F$ is less than $L$ as well.
\par
So, it only remains to show that equation \eqref{3.kwak} with this choice of $\Bbb F$ does not possess an IM with the base $P_n\Bbb H$. Assume that such a manifold $\mathbb M$ exists. Then the projector $P_n:\Bbb M\to P_n\Bbb H$ must be one-to-one. Let us consider one dimensional plane $\Cal H_1:=\R\vec e_n^+\subset P_n\Bbb H$ and its image $\mathcal M_1=P_n^{-1}\mathcal H_1\subset\Bbb M$ on the manifold. Clearly $\mathcal M_1$ is invariant with respect to the time evolution generated by equation \eqref{3.kwak}. On the other hand, if we denote by $x(t)$ and $y(t)$ the components of a solution of \eqref{3.kwak} which correspond to the vectors $\vec e_n^+$ and $\vec e_{n+1}^-$ respectively, we get an explicit formula
\begin{multline*}
x(t)=e^{-\mu t}\(x(0)\cos(\omega t)+y(0)\sin(\omega t)\),\\ y(t)=e^{-\mu t}\(y(0)\cos(\omega t)-x(0)\sin(\omega t)\),
\end{multline*}
where $\mu=\operatorname{Re}\mu_n^+$ and $\omega=\operatorname{Im}\mu_n^+$. Since by the construction $\omega\ne0$, $x(t)$ oscillates and, in particular, has infinitely many zeros. Since the considered trajectory is not periodic, this contradicts to the injectivity of $P_n$ at zero. Thus, the inertial manifold cannot exist and the proposition is proved.
\end{proof}
\begin{rem} The proved proposition demonstrate the absence of an IM for dimension $2n$ only and with the base $P_n\Bbb H$ only and do not exclude its existence for different dimensions or/and different bases. However, if we assume that the spectral gap conditions \eqref{3.gap} are not satisfied for any $n\in\Bbb N$, namely, that the condition
\begin{equation}\label{3.nogaps}
\sup_{n\in\Bbb N}\left\{\frac{(\lambda_{n+1}-\lambda_n)^2}
{\lambda_{n+1}+\lambda_n+2\sqrt{\lambda_{n+1}^2-\lambda_n\lambda_{n+1}+\lambda_n^2}}\right\}<L
\end{equation}
holds, then following the scheme suggested in \cite{5}(see also \cite{AZ2,19}) one can construct a nonlinearity $\Bbb F$ which is globally bounded and Lipchitz with the constant $L$ as a map from $\Bbb H$ to $\Bbb H$, such that the global attractor of equation \eqref{3.kwak} does not belong to any finite dimensional Lipschitz submanifold of the phase space $\Bbb H$. Moreover, the dynamics generated by \eqref{3.kwak} on this attractor is infinite dimensional. In particular, there are two distinct trajectories $\xi_1(t)$ and $\xi_2(t)$ belonging to the attractor such that
$$
\|\xi_1(t)-\xi_2(t)\|_{\Bbb H}\le Ce^{-\alpha t^3}, \ \alpha>0,\ \ t>0,
$$
see \cite{AZ2,19} for more details. The proof of these results is rather technical although follows word by word the construction given in \cite{5,19} (with the proper minor corrections related with the concrete structure of equation \eqref{3.kwak}). In order to avoid the technicalities we will not give the rigorous proof here.
\end{rem}
To conclude the section, we briefly consider the following particular case of equation \eqref{3.kwak}:
\begin{equation}\label{3.wave}
\Dt\(\begin{matrix} u\\v\end{matrix}\)+\(\begin{matrix} 1&1\\0&1\end{matrix}\)A\(\begin{matrix} u\\v\end{matrix}\)=\(\begin{matrix} 0\\F(u)\end{matrix}\),
\end{equation}
where the nonlinearity $\Bbb F(u,v):=(0,F(u))^t$. As we have discussed in Section \ref{s1}, this particular form is typical for some versions of the Kwak transform. Of course, we may treat this equation as \eqref{3.kwak} and use the spectral gap condition \eqref{3.gap}, but this is not optimal since the specific form of $\Bbb F$ allows us to apply the Banach contraction theorem in the functional space which do not contain the $v$-component and this makes the spectral gap conditions better. Namely, the following result holds.

\begin{theorem}\label{Th3.main2}Let $n\in\Bbb N$ be such that the spectral gap condition
\begin{equation}\label{3.gapbest}
 \sqrt{\lambda_{n+1}}-\sqrt{\lambda_n}>\sqrt L
\end{equation}
is satisfied where $L$ is a global Lipchitz constant for the map $F:H\to H$. Then, equation \eqref{3.wave} possesses $2n$-dimensional IM with the base $P_n\Bbb H$. Moreover, the exponent in the tracking property satisfies $\theta=\sqrt{\lambda_n\lambda_{n+1}}$. Moreover, the spectral gap condition \eqref{3.gapbest} is sharp in the sense that the analogue of Proposition \ref{Prop3.sharp} also holds.
\end{theorem}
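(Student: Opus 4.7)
The plan is to follow the Perron scheme used for Theorem \ref{Th3.main}, replacing the general estimate of Corollary \ref{Cor2.half} by the sharper, structure-adapted version in Corollary \ref{Cor2.trhalf}. Fix $n$ satisfying \eqref{3.gapbest} and set $\theta:=\sqrt{\lambda_n\lambda_{n+1}}$. For a given $\xi_0^+\in P_n\Bbb H$, I would seek a backward-in-time trajectory $\xi=(u,v)^t$ of \eqref{3.wave} with $P_n\xi(0)=\xi_0^+$ lying in $L^2_{e^{\theta t}}(\R_-,\Bbb H)$, and then set $M(\xi_0^+):=Q_n\xi(0)$.

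The decisive simplification afforded by the special form $\Bbb F(u,v)=(0,F(u))^t$ is that Corollary \ref{Cor2.trhalf} reduces the full fixed-point problem in $\Bbb H$ to a self-contained problem for the $u$-component alone,
\begin{equation*}
u = {\rm L} F(u) + {\rm T}\xi_0^+,\qquad u\in L^2_{e^{\theta t}}(\R_-,H).
\end{equation*}
Since $F:H\to H$ is $L$-Lipschitz and $\|{\rm L}\|=(\sqrt{\lambda_{n+1}}-\sqrt{\lambda_n})^{-2}$ by Proposition \ref{Prop2.tr}, the spectral gap condition \eqref{3.gapbest} is exactly the contraction requirement $L\|{\rm L}\|<1$. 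Banach's theorem then yields a unique $u$, globally Lipschitz in $\xi_0^+$, and the $v$-component is reconstructed uniquely from the second equation $\Dt v+Av=F(u)$ by selecting, mode by mode, the backward-in-time branch that belongs to $L^2_{e^{\theta t}}$ (this is exactly the reconstruction built into the operator $\Bbb T$). The remaining steps---parabolic smoothing upgrading $L^2_{e^{\theta t}}$-dependence to pointwise Lipschitz dependence, invariance of $\Bbb M$ from uniqueness in the fixed-point equation, and the cut-off argument yielding the exponential tracking \eqref{3.track} with rate $\theta=\sqrt{\lambda_n\lambda_{n+1}}$---transcribe from the proof of Theorem \ref{Th3.main} essentially verbatim, with every appeal to Proposition \ref{Prop2.1} replaced by Proposition \ref{Prop2.tr}.

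For sharpness, I would mimic Proposition \ref{Prop3.sharp} within the restricted class $\Bbb F(u,v)=(0,F(u))^t$. When \eqref{3.gapbest} fails strictly, set $K:=(\sqrt{\lambda_{n+1}}-\sqrt{\lambda_n})^2<L$ and take $F$ to act as $-K\,{\rm Id}$ on $\operatorname{span}\{e_n,e_{n+1}\}$ and as zero elsewhere. The restrictions of $\Bbb A-\bar{\Bbb F}$, with $\bar{\Bbb F}(u,v):=(0,F(u))^t$, to the $n$-th and $(n+1)$-th modes become $\bigl(\begin{smallmatrix}\lambda_k&\lambda_k\\K&\lambda_k\end{smallmatrix}\bigr)$ with eigenvalues $\lambda_k\pm\sqrt{\lambda_k K}$, and the very value of $K$ is what makes $\lambda_n+\sqrt{\lambda_n K}=\lambda_{n+1}-\sqrt{\lambda_{n+1}K}=\sqrt{\lambda_n\lambda_{n+1}}$. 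I would then add an arbitrarily small anti-diagonal perturbation $\epsilon\bigl(\begin{smallmatrix}0&1\\1&0\end{smallmatrix}\bigr)$ to $F$ on $\operatorname{span}\{e_n,e_{n+1}\}$; a direct degenerate perturbation analysis (most transparent in the second-order form \eqref{1.wave}, where the splitting of the double root $\mu_0=-\sqrt{\lambda_n\lambda_{n+1}}$ is governed at leading order by $\delta^2=-\epsilon^2\sqrt{\lambda_n\lambda_{n+1}}/(4(\sqrt{\lambda_{n+1}}-\sqrt{\lambda_n})^2)<0$) shows that the pair of real eigenvalues collapses into a complex-conjugate pair with non-zero imaginary part. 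The resulting oscillating trajectories in the coupled two-mode subsystem ruin the injectivity of $P_n$ on any candidate invariant Lipschitz graph $\Bbb M$ over $P_n\Bbb H$, exactly as in the closing argument of Proposition \ref{Prop3.sharp}.

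The main point of potential friction is precisely this last step: unlike in Proposition \ref{Prop3.sharp}, the coupling perturbation is forced to remain of the form $(0,\widetilde F(u))^t$, so the unconstrained "rotation in the two-dimensional eigenplane" used there is not directly available, and one has to check that the off-diagonal perturbation above genuinely produces complex eigenvalues. Carrying out this degenerate perturbation computation rigorously (including tracking the left and right eigenvectors of the non-self-adjoint $4\times 4$ reduced block) is the single point where real bookkeeping is required; everything else is an essentially mechanical adaptation of the proof of Theorem \ref{Th3.main} driven by the sharper linear estimate of Proposition \ref{Prop2.tr}.
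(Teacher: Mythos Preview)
Your proposal is correct and follows essentially the same route as the paper: the existence part is verbatim the paper's argument (fixed point for $u$ alone via Corollary \ref{Cor2.trhalf}, then reconstruction of $v$), and your anti-diagonal perturbation of $F$ on $\operatorname{span}\{e_n,e_{n+1}\}$ is, once rewritten in the $(u_n,v_n,u_{n+1},v_{n+1})$ coordinates, exactly the $4\times4$ matrix $\widetilde{\Bbb F}$ the paper uses, and your leading-order splitting $\delta^2=-\epsilon^2\sqrt{\lambda_n\lambda_{n+1}}/\bigl(4(\sqrt{\lambda_{n+1}}-\sqrt{\lambda_n})^2\bigr)$ agrees with the paper's explicit characteristic-polynomial computation \eqref{3.maple}. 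The only point the paper makes explicit that you only allude to: after the perturbation the relevant eigenvectors $\vec e_n^{\pm}(\eb)$ no longer sit inside a single Fourier block, so the ``oscillation kills injectivity of $P_n$'' step does not transcribe literally from Proposition \ref{Prop3.sharp}; the paper closes this by invoking that bi-Lipschitz projections form an open set, so for small $\eb$ an IM over $P_n\Bbb H$ would also be a Lipschitz graph over $P_{n-1}\Bbb H\times\operatorname{span}\{\vec e_n^-(\eb),\vec e_n^+(\eb)\}$, which is where the oscillation argument applies directly.
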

\begin{proof}[Sketch of proof] Analogously to the proof of Theorem \ref{Th3.main}, we seek the desired manifold via the  solutions of the backward problem \eqref{3.au} belonging to the functional space $L^2_{e^{\theta t}}(\R_-,\Bbb H)$ where the exponent $\theta$ should be chosen in an optimal way. However, in our special case, we need only the norm of the $u$-component of the solution $\xi(t)=(u(t),v(t))^t$ in order to control the nonlinearity, so we need to optimize only the norm of the $u$-component. After the desired $u$-component of the solution will be constructed via the implicit function theorem, the $v$-component can be easily restored from the second equation of \eqref{3.wave}. To be more precise, we use Corollary \ref{Cor2.trhalf} and rewrite the auxiliary problem \ref{3.au} in the form
\begin{equation}\label{3.aa}
u={\rm L}F(u)+{\rm T}\xi_0^+,\ \ u\in L^2_{e^{\theta t}}(\R_-,H).
\end{equation}
Then, assumption \eqref{3.gapbest} together with equality \eqref{2.gavL} guarantee that the right-hand side of \eqref{3.aa} is a contraction and, therefore, the desired solution $\xi=(u,v)^t$ for the auxiliary problem \eqref{3.au} exists. The proof of the existence of the IM is completed exactly as in the proof of Theorem \ref{Th3.main}.
\par
The sharpness of the spectral gap condition \eqref{3.gapbest} can be verified similarly to the proof of Proposition \ref{Prop3.sharp}. However, since we now have less freedom in the choice of operators $\bar{\Bbb F}$ and $\widetilde{\Bbb F}$, some extra accuracy is required. Namely, the analogue of formula \eqref{3.fig1} now reads
\begin{equation}\label{3.fig2}
\bar{\Bbb F}_k=-\(\begin{matrix} 0&0\\K&0\end{matrix}\), \ \ k=n,n+1,\ \ \bar{\Bbb F}_k=0,\ \ k\ne n,n+1.
\end{equation}
Then, for the parameter $K$, we get the equation
$$
\mu_n^+=\lambda_n+\sqrt{K\lambda_n}=\lambda_{n+1}-\sqrt{K\lambda_{n+1}}=\mu_{n+1}^-
$$
which gives
$$
K=(\sqrt{\lambda_{n+1}}-\sqrt{\lambda_n})^2,\ \ \mu_n^+=\mu_{n+1}^-=\sqrt{\lambda_{n+1}\lambda_n}.
$$
The construction of the operator $\widetilde{\Bbb F}$ is a bit more complicated since we are not able to perturb the equation arbitrarily. Namely, to preserve the structure of the equation, we consider the following 4 dimensional perturbation in the base related with $\{u_{n},v_n,u_{n+1},v_{n+1}\}$:
\begin{equation}
\widetilde {\Bbb F}=-\(\begin{matrix}0&0&0&0\\0&0&\eb&0\\0&0&0&0\\\eb&0&0&0 \end{matrix}\).
\end{equation}
Then computing the determinant
$$
f(\lambda):=\operatorname{det}\(\begin{matrix}\lambda_n-\lambda&\lambda_n&0&0\\
K&\lambda_n-\lambda&\eb&0\\0&0&\lambda_{n+1}-\lambda&\lambda_{n+1}\\
\eb&0&K&\lambda_{n+1}-\lambda \end{matrix}\)
$$
and putting $\lambda=y+\sqrt{\lambda_n\lambda_{n+1}}$ and $K=(\sqrt{\lambda_{n+1}}-\sqrt{\lambda_n})^2$, we arrive at
\begin{multline}\label{3.maple}
f(y+\sqrt{\lambda_n\lambda_{n+1}})=-\eb^2\lambda_n\lambda_{n+1}-\\-4\sqrt{\lambda_n\lambda_{n+1}}
(\sqrt{\lambda_{n+1}}-\sqrt{\lambda_n})^2y^2-2(\sqrt{\lambda_{n+1}}-\sqrt{\lambda_n})^2y^3+y^4.
\end{multline}
The last formula shows that for small $\eb$ the eigenvalues $\mu_n^+(\eb)$ and $\mu_{n+1}^-(\eb)$ which corresponds to $\mu_n^+=\mu_{n+1}^-$ become complex conjugate.
\par
Note also that, in contrast to the proof of Proposition \ref{Prop3.sharp}, the corresponding eigenvectors $\vec e_{n}^\pm(\eb)$ and $\vec e_{n+1}^\pm(\eb)$ depend explicitly on $\eb$. Since this dependence is continuous, then the existence of Lipschitz IM over the base $P_n\Bbb H$ implies the existence of an IM over
$$
P_{n-1}\Bbb H\times \operatorname{span}\{\vec e_n^-(\eb),\vec e_{n}^+(\eb)\}
$$
if $\eb$ is small enough (due to the fact that the set of bi-Lipschitz projectors is open, see \cite{19}). The rest of the proof is exactly the same as in Proposition \ref{Prop3.sharp}. Thus, the theorem is proved.
\end{proof}
\begin{rem}\label{Rem3.last} The presence of the square root of the Lipschitz constant $L$ may look unnatural. The nature of this root can be clarified if we use an alternative method of constructing the IM for \eqref{3.wave} by reducing the equation to the self-adjoint case as described in Remark \ref{Rem1.bla}, namely, to equation \eqref{1.saa}. In order to see where the square root comes from, we just need to consider this equation in the space $\Bbb H:=H\times H$ endowed by an optimal norm:
$$
\|\xi\|_{\Bbb H_L}^2:=L\|u\|^2_H+\|v\|^2_H.
$$
Indeed, in this special metric, we have
\begin{multline}
\|\Bbb F(\tilde u_1,v_1)-\Bbb F(\tilde u_2,v_2)\|_{\Bbb H_L}^2=\\=L\|A^{1/2}(v_1-v_2)\|^2_H+\|F(A^{1/2}\tilde u_1)-F(A^{1/2}\tilde u_2)\|^2_H\le
 \\\le L(L\|A^{1/2}(\tilde u_1-\tilde u_2)\|^2_H+\|A^{1/2}(v_1-v_2)\|^2_H)=L\|A^{1/2}(\xi_1-\xi_2)\|^2_{\Bbb H_L}.
\end{multline}
Thus, the Lipschitz constant for the map $\Bbb F$ in this case is exactly $\sqrt L$. Moreover, applying the standard spectral gap conditions to this self-adjoint case we see that the IM exists at least if
$$
\frac{\lambda_{n+1}-\lambda_n}{\lambda_n^{1/2}+\lambda_{n+1}^{1/2}}=
\sqrt{\lambda_{n+1}}-\sqrt{\lambda_n}>\sqrt L
$$
which coincides with assumption \eqref{3.gapbest} of Theorem \ref{Th3.main}.
\end{rem}

\end{document}